\newcommand{\R}{\mathbb{R}}
\newcommand{\N}{\mathbb{N}}
\newcommand{\Z}{\mathbb{Z}}
\numberwithin{equation}{section}
\newcommand{\osc}[1]{\omega_{#1}}
\newcommand{\dist}[0]{\operatorname{dist}}
\newcommand{\eps}[0]{\varepsilon}
\theoremstyle{plain}
\newtheorem{thm}[equation]{Theorem}
\newtheorem{lem}[equation]{Lemma}
\newtheorem{cor}[equation]{Corollary}
\theoremstyle{definition}
\theoremstyle{remark}
\newtheorem{rem}[equation]{Remark}
\title{Square functions with general measures II}
\author{Henri Martikainen}
\address{D\'epartement de Math\'ematiques, B\^atiment 425, Facult\'e des Sciences d'Orsay, Universit\'e Paris-Sud 11, F-91405 Orsay Cedex}
\email{henri.martikainen@math.u-psud.fr}
\thanks{H.M. is supported by the Emil Aaltonen Foundation. M.M. is supported by Fondation de Math\'ematiques Jacques Hadamard (FMJH).
The research of T.O. is partially supported by the Academy of Finland, grant 133264. The two first named authors wish to thank Universit\'e Paris-Sud 11, Orsay, for its hospitality.}
\author{Mihalis Mourgoglou}
\address{D\'epartement de Math\'ematiques, B\^atiment 425, Facult\'e des Sciences d'Orsay, Universit\'e Paris-Sud 11, F-91405 Orsay Cedex}
\email{mihalis.mourgoglou@math.u-psud.fr}
\author{Tuomas Orponen}
\address{Department of Mathematics and Statistics, University of Helsinki, P.O.B. 68, FI-00014 Helsinki, Finland}
\email{tuomas.orponen@helsinki.fi}
\subjclass[2010]{42B20}
\keywords{Square function, non-homogeneous analysis, RBMO, local $Tb$}
\begin{document}

\begin{abstract}
We continue developing the theory of conical and vertical square functions on $\R^{n}$, where $\mu$ is a power bounded measure, possibly non-doubling. We provide new boundedness criteria and construct various counterexamples.

First, we prove a general local $Tb$ theorem with tent space $T^{2,\infty}$ type testing conditions to characterise the $L^{2}$ boundedness. Second, we completely answer the question, whether the boundedness of our operators on $L^{2}$ implies boundedness on other $L^{p}$ spaces, including the endpoints. For the conical square function, the answers are generally affirmative, but the vertical square function can be unbounded on $L^{p}$ for $p > 2$, even if $\mu = dx$. For this, we present a counterexample.  Our kernels $s_t$, $t > 0$, do not necessarily satisfy any continuity in the first variable -- a point of technical importance throughout the paper.

Third, we construct a non-doubling Cantor-type measure and an associated conical square function operator, whose $L^{2}$ boundedness depends on the exact aperture of the cone used in the definition. Thus, in the non-homogeneous world, the 'change of aperture' technique -- widely used in classical tent space literature -- is not available. Fourth, we establish the sharp $A_{p}$-weighted bound for the conical square function under the assumption that $\mu$ is doubling.
\end{abstract}

\maketitle

\tableofcontents

\section{Introduction}
This paper is concerned with the $L^{p}$ theory of the conical and vertical square functions operators $S$ and $V$, defined formally on complex valued functions $f$ on $\R^{n}$ by
\begin{displaymath} Sf(x) = \Big( \iint_{\Gamma(x)} |\theta_t f(y)|^2 \,\frac{d\mu(y) dt}{t^{m+1}}\Big)^{1/2} \quad \text{and} \quad Vf(x) = \left( \int_{0}^{\infty} |\theta_{t}f(y)|^{2} \, \frac{dt}{t} \right)^{1/2}. \end{displaymath}
Here $\Gamma(x)$, $x \in \R^{n}$, is the cone $\Gamma(x) = \{(y,t) \in \R^{n+1}_+\colon \, |x-y| < t\}$, and $\theta_{t}$, $t > 0$, is a linear operator to be specified momentarily. A major part of the paper is couched in the non-homogeneous setting, meaning precisely that the Borel measure $\mu$ and the exponent $m > 0$ above are related by the condition
\begin{equation}\label{powerBound} \mu(B(x,r)) \lesssim r^{m}, \qquad x \in \R^{n}, \; r > 0. \end{equation}
The only exception to this rule is the final chapter, where weighted theory for $S$ is established under the extra assumption that the measure $\mu$ is doubling.

The linear operators $\theta_{t}$, $t > 0$, have the form
\begin{displaymath} \theta_{t}f(x) = \int_{\R^{n}} s_{t}(x,y)f(y) \, d\mu(y), \end{displaymath}
where, for some fixed exponent $\alpha > 0$, the kernels $s_{t}$ satisfy the size and continuity conditions
\begin{equation}\label{eq:size}
|s_t(x,y)| \lesssim \frac{t^{\alpha}}{(t+|x-y|)^{m+\alpha}}
\end{equation}
and
\begin{equation}\label{eq:hol}
|s_t(x,y) - s_t(x,z)| \lesssim \frac{|y-z|^{\alpha}}{(t+|x-y|)^{m+\alpha}} \end{equation}
whenever $|y-z| < t/2$. It is worth emphasising that no regularity from $s_{t}$ is required in the first variable.

A prequel to this article is \cite{MM} by the first two authors, establishing a $Tb$ type theorem to characterise the $L^{2}(\mu)$-boundedness of $S$ and $V$. Namely, the operator $V$ is bounded on $L^{2}(\mu)$, if and only if there exists a function $b \in L^{\infty}(\mu)$ such that
\begin{displaymath}
\Big| \int_Q b(x) \,d\mu(x) \Big| \gtrsim \mu(Q)
\end{displaymath}
and
\begin{displaymath}
\iint_{\widehat Q} |\theta_t b(x)|^2 \,d \mu(x) \frac{dt}{t} \lesssim \mu(3Q)
\end{displaymath}
for every cube $Q \subset \R^n$. Here $\widehat{Q}$ is the Carleson box $\widehat Q = Q \times (0, \ell(Q)) \subset \R^{n} \times \R_{+}$. 

The $L^{2}(\mu)$ boundedness of the operator $S$ reduces to this by observing that $\|Sf\|_{L^{2}(\mu)} = \|\tilde{V}f\|_{L^{2}(\mu)}$, where $\tilde{V}$ is the vertical square function with kernel
\begin{displaymath} \tilde{s}_{t}(x,y) = \left(\frac{\mu(B(x,t))}{t^{m}}\right)^{1/2}s_{t}(x,y). \end{displaymath}
Since the $x$-continuity of the kernel $\tilde{s}_{t}$ is required neither here nor in \cite{MM}, the kernel $\tilde{s}_{t}(x,y)$ still satisfies the assumptions \eqref{eq:size} and \eqref{eq:hol}. Consequently, the $Tb$ theorem in \cite{MM} applies directly to $\tilde{V}$, characterising the $L^{2}(\mu)$-boundedness of $S$.

The first contribution of the present paper is to prove a \textbf{local} $Tb$ theorem for the operators $V$ and $S$. The result is, again, only formulated for $V$, but the reduction above shows how it can also be applied to $S$. 
\begin{thm}\label{localTb} Assume that to every cube $Q \subset \R^n$ there is associated a function $b_Q$ which satisfies:
\begin{enumerate}
\item spt$\,b_Q \subset Q$;
\item $|\langle b_Q \rangle_Q| \gtrsim 1$;
\item $\|b_Q\|_{L^{\infty}(\mu)} \lesssim 1$;
\item $\sup_{R \subset \R^n \textup{ cube}} \mu(3R)^{-1} \iint_{\widehat R} |\theta_t b_Q(x)|^2 d\mu(x) dt/t \lesssim 1$.
\end{enumerate}
Then $V$ is bounded on $L^{2}(\mu)$. 
\end{thm}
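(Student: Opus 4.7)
The plan is to combine Nazarov--Treil--Volberg random dyadic technology with a $b_{Q}$-adapted martingale decomposition and a Carleson embedding argument. A direct reduction to the global $Tb$ result of \cite{MM} is not available, because one cannot glue the local $b_{Q}$ into a single accretive function in an obvious way. Instead I would fix a pair of random dyadic lattices on $\R^{n}$ and, via the standard good/bad probabilistic reduction, restrict attention to functions expanded in good cubes; the bad part disappears on averaging. The goal is to bound $\|Vf\|_{L^{2}(\mu)}^{2} = \iint |\theta_{t}f|^{2}\,d\mu\,dt/t$ by $\|f\|_{L^{2}(\mu)}^{2}$ directly, since duality is unavailable for square functions.

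For each good cube $Q$ I would construct a $b_{Q}$-adapted martingale difference operator $\Delta_{Q}^{b}$ in the spirit of Nazarov--Treil--Volberg--Hofmann: it takes values in $\lspan\{b_{R}\mathbf{1}_{R} : R \in \children(Q)\}$ modulo the averages at the parent scale. Conditions (2) and (3) guarantee that the relevant averaging matrices are invertible, yielding a frame-type decomposition $f = \sum_{Q} \Delta_{Q}^{b}f$ with a two-sided bound $\|f\|_{L^{2}(\mu)}^{2} \sim \sum_{Q} \|\Delta_{Q}^{b}f\|_{L^{2}(\mu)}^{2}$. Each $\Delta_{Q}^{b}f$ is supported in $Q$ and has $\mu$-mean zero, and on every child $R$ has the form $\alpha_{Q,R} b_{R} + \beta_{Q,R}$ with $|\alpha_{Q,R}|, |\beta_{Q,R}|$ controlled by the averages of $f$ over $Q$ and $R$.

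Expanding $\theta_{t}f = \sum_{Q} \theta_{t}\Delta_{Q}^{b}f$ and organising by the ratio of $t$ to $\ell(Q)$, the off-diagonal regime $\ell(Q) \not\sim t$ is handled by combining the vanishing mean of $\Delta_{Q}^{b}f$ with the size/H\"older bounds \eqref{eq:size}--\eqref{eq:hol}, yielding geometric decay that is summable by an NTV-type orthogonality argument. The main term is the on-diagonal regime $t \sim \ell(Q)$: on each child $R$ one substitutes $\Delta_{Q}^{b}f|_{R} = \alpha_{Q,R} b_{R} + \beta_{Q,R}$ and estimates $\iint_{\widehat{R}} |\theta_{t}(\alpha_{Q,R}b_{R})|^{2}\,d\mu\,dt/t$ using hypothesis (4), which says exactly that $|\theta_{t}b_{R}|^{2}\,d\mu\,dt/t$ is a Carleson measure with constant uniform in $R$. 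A non-homogeneous Carleson embedding (of Tolsa type, compatible with the $\mu(3R)$ on the right-hand side) should then convert these scale-localised estimates into a global $L^{2}$ bound, matching the frame square function of the $\Delta_{Q}^{b}$'s.

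The hard part is precisely this on-diagonal estimate, because (4) provides only a tent-space $T^{2,\infty}$-type control, not the $L^{2}(\mu \otimes dt/t)$ bound that would permit a direct Cauchy--Schwarz. To make the Carleson embedding close the argument one needs a stopping-time/corona decomposition within each $Q$ guaranteeing that the coefficients $\alpha_{Q,R}$ are faithfully dominated by maximal averages of $f$, so that the frame bound $\|f\|_{2}^{2} \sim \sum_{Q}\|\Delta_{Q}^{b}f\|_{2}^{2}$ can be invoked after the embedding. A further structural constraint, already decisive in \cite{MM}, is the complete absence of regularity of $s_{t}$ in the first variable: every off-diagonal cancellation must be extracted from the $y$-variable through \eqref{eq:hol}, ruling out adjoint-based manipulations and dictating that the scale sum be organised by the scale of $\Delta_{Q}^{b}f$ rather than by the scale of the local test function.
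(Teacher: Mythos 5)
Your toolbox is the right one (random grids with goodness, $b$-adapted martingale differences, a Carleson embedding fed by hypothesis (4), a stopping time), but two structural points in your outline would make the argument fail as written. First, you build $\Delta_Q^b$ out of the child-indexed test functions $b_R$, $R \in \children(Q)$, changing the test function at every single scale. With that choice neither the square function estimate $\sum_Q \|\Delta_Q^b f\|_{L^2(\mu)}^2 \lesssim \|f\|_{L^2(\mu)}^2$ nor the later use of (4) is available: hypothesis (4) is a Carleson condition for one fixed $b_Q$ tested over all boxes $\widehat R$, and it can only be brought to bear if the same test function persists over a whole range of scales. The paper's construction does exactly this: a stopping time produces coronas, each cube $Q'$ is assigned the test function $b_{(Q')^a}$ of its stopping ancestor, the stopping cubes form a Carleson family (its Lemma 2.1), and only then does the NTV-type bound $\sum_Q\|\Delta_Q f\|^2 \lesssim \|f\|^2$ hold. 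Your closing remark about a corona decomposition gestures at this, but in your scheme it is an afterthought for coefficient domination rather than the mechanism that makes the decomposition and the paraproduct exist at all.

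Second, you have the hard and easy regimes interchanged. The diagonal $t\sim\ell(Q)$ is essentially trivial here because of the strong size estimate of $s_t$ (the paper needs only $\supp\Delta_Qf\subset Q$, mean zero, and the square function bound, citing \cite{MM} for all such cases). The real difficulty is the regime $R\subset Q$, $\ell(Q)>2^r\ell(R)$, at Whitney scale $t\sim\ell(R)$: the portion of $\Delta_{R^{(k)}}f$ living on the child $R^{(k-1)}\supset R$ has no $y$-separation from the point of evaluation, so the size/H\"older bounds give no decay, and the vanishing mean cannot be exploited because $s_t$ has no regularity in the first variable. Your claim that this off-diagonal regime is ``handled by vanishing mean plus size/H\"older with geometric decay'' is therefore exactly the step that breaks. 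These contributions accumulate over all ancestors and must instead be telescoped -- which requires one test function per corona -- into the paraproduct term $\langle f\rangle_{R^{(r)}}\,\theta_t b_{(R^{(r)})^a}(x)/\langle b_{(R^{(r)})^a}\rangle_{R^{(r)}}$ (plus error terms with decay $2^{-\alpha k/2}$ coming from goodness), and the paraproduct is then estimated by a Carleson embedding that combines (4), applied to the boxes $\widehat Q$ of a family of maximal cubes well inside each corona, with the Carleson property of the stopping cubes. Without the corona structure and this telescoping, hypothesis (4) never meets the accumulated sum over scales, and the proof does not close.
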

The assumptions are scale invariant (of type $L^{\infty}$ and $T^{2,\infty}$), which seems to be the best one can currently do with general measures.
This corresponds with the most general assumptions in the Calder\'on--Zygmund case by Nazarov--Treil--Volberg \cite{NTVa}. However, they are only able to use
BMO type bounds for $Tb_Q$ assuming that the kernel of $T$ is antisymmetric (with $L^{\infty}$ bounds for $Tb_Q$ the kernel can be general). Here,
in the square function world, one can cope with (4) without posing any such further restrictions.

In the Calder\'on--Zygmund world the usage of probabilistic techniques in connection with \textbf{local} $Tb$ theorems is a surprisingly delicate matter for various reasons.
This has been elaborated in Remark 4.1 \cite{HM} and in \cite{LV} (see especially Remark 2.14). We note that in our situation 
the proof is based on the averaging identity over good Whitney regions proved in \cite{MM}. This completely avoids all the technicalities here.

After the proof of Theorem \ref{localTb}, the paper studies the boundedness of $S$ and $V$ on some other spaces: assuming that $S$ (resp. $V$) is bounded on $L^{2}(\mu)$, does it follow that $S$ (resp. $V$) is then also bounded on other Lebesgue spaces, including the endpoints? The answer for $V$ is more entertaining because of the negative answer in $L^p(\mu)$ for $p > 2$ -- for this we construct a counterexample.
In any case, we formulate a complete answer to this question in the next theorem.
\begin{thm}\label{LpSF} 
\begin{itemize}
\item[(S)] Assume that $S$ is bounded on $L^{2}(\mu)$. Then $S$ is also a bounded mapping $L^{1}(\mu) \to L^{1,\infty}(\mu)$ and $L^{\infty}(\mu) \to \textup{RBMO}(\mu)$. Consequently, $S$ is bounded on $L^{p}(\mu)$ for all $1 < p <  \infty$.
\item[(V)] Assume that $V$ is bounded on $L^{2}(\mu)$. Then $V$ is also a bounded mapping $L^{1}(\mu) \to L^{1,\infty}(\mu)$. Consequently, $V$ is bounded on $L^{p}(\mu)$ for all $1 < p \leq 2$. The operator $V$ need not map $L^{p}(\mu) \to L^{p}(\mu)$ for any $p > 2$, even if $\mu = dx$.
\end{itemize}
\end{thm}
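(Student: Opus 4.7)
Apply Tolsa's non-homogeneous Calder\'on--Zygmund decomposition to $f \in L^1(\mu)$ at height $\lambda > 0$: write $f = g + \sum_j b_j$ with $\|g\|_{L^\infty} \lesssim \lambda$, each $b_j$ supported in a cube $Q_j$ with $\int b_j \, d\mu = 0$, and $\sum_j \mu(2Q_j) + \|g\|_{L^1(\mu)} \lesssim \|f\|_{L^1(\mu)}$. The good part is handled directly by the hypothesis $S \colon L^2(\mu) \to L^2(\mu)$ and Chebyshev, and the exceptional set $\bigcup_j 2Q_j$ has $\mu$-measure $\lesssim \|f\|_{L^1(\mu)}/\lambda$, so it remains to bound $\sum_j \int_{\R^n \setminus 2Q_j} S b_j \, d\mu$. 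Combining \eqref{eq:hol} with the vanishing $\mu$-mean of $b_j$ yields the pointwise bound
\[ |\theta_t b_j(y)| \lesssim \|b_j\|_{L^1(\mu)} \frac{\ell(Q_j)^\alpha}{(t + |y - c_{Q_j}|)^{m+\alpha}} \qquad (t \gtrsim \ell(Q_j)), \]
with \eqref{eq:size} covering the small-$t$ regime. Inserting this into the cone integral defining $S b_j$, exchanging the order of integration, and summing over dyadic annuli around $Q_j$ via \eqref{powerBound} gives $\int_{\R^n \setminus 2Q_j} S b_j \, d\mu \lesssim \|b_j\|_{L^1(\mu)}$, which sums to $\lesssim \|f\|_{L^1(\mu)}$.

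\textbf{Plan for part (S), $L^\infty \to \mathrm{RBMO}$ and interpolation.} For $f \in L^\infty(\mu)$ and a cube $Q$, split $f = f \chi_{2Q} + f \chi_{\R^n \setminus 2Q} =: f_1 + f_2$. The $L^2$ hypothesis and Cauchy--Schwarz give $\int_Q S f_1 \, d\mu \lesssim \|f\|_{L^\infty} \mu(2Q)$. Choose the RBMO constant $c_Q := S f_2(x_Q)$ for a fixed $x_Q \in Q$; the desired pointwise oscillation $|S f_2(x) - c_Q| \lesssim \|f\|_{L^\infty}$ for $x \in Q$ reduces to controlling the difference of the cone integrals over $\Gamma(x)$ and $\Gamma(x_Q)$, which by \eqref{eq:size}, \eqref{powerBound}, and the fact that the two cones coincide at heights $t \gtrsim \ell(Q)$ up to translation, can be done without any $x$-regularity of $s_t$. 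Tolsa's RBMO criterion then gives $S f \in \mathrm{RBMO}(\mu)$. Marcinkiewicz interpolation covers $1 < p \le 2$, while Tolsa's non-homogeneous interpolation between $L^2$ and $\mathrm{RBMO}(\mu)$ covers $2 \le p < \infty$.

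\textbf{Plan for part (V).} The weak-type $(1,1)$ argument from part (S) transplants almost verbatim: the estimates on $\theta_t b_j(y)$ are the same, and the vertical evaluation at $x$ on the outside is geometrically simpler than integration over $\Gamma(x)$. Hence $V \colon L^1(\mu) \to L^{1,\infty}(\mu)$, and Marcinkiewicz gives $L^p(\mu)$ for $1 < p \le 2$. For the counterexample with $\mu = dx$ and arbitrary $p > 2$, the idea is to exploit the complete absence of $x$-regularity by taking
\[ s_t(x,y) = \eps_t(x)\, \psi_t(x-y), \]
where $\psi_t$ is a standard Littlewood--Paley bump (so that the classical $g$-function is $L^2$-bounded) and $\eps_t \colon \R^n \to \{-1, +1\}$ is a highly oscillatory sign chosen at each scale. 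Since $|\eps_t(x)| = 1$ pointwise, Plancherel gives $\|V f\|_{L^2} \lesssim \|f\|_{L^2}$. Calibrating $\eps_t$ to defeat duality against $L^{p'}$ --- by arranging, scale by scale, the sign of the dominant $L^{p'}$ testing functional --- produces a sequence of test functions $f_k \in L^p$ with $\|V f_k\|_{L^p}/\|f_k\|_{L^p} \to \infty$.

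\textbf{Main obstacle.} The sharpest technical point is the oscillation estimate for $S f_2$ in the RBMO step: because $s_t$ carries no first-variable regularity, all cancellation has to be harvested from the geometric shift of the cones $\Gamma(x)$ as $x$ varies in $Q$, in tandem with \eqref{powerBound} in the genuinely non-doubling regime. The counterexample in part (V) is also delicate, in that $\eps_t(x)$ must simultaneously preserve $L^2$-boundedness and force coherent interference across arbitrarily many scales, and the construction must work for every $p > 2$ rather than a single one.
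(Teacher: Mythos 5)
The positive parts of your plan follow essentially the paper's route (Tolsa's Calder\'on--Zygmund decomposition for the weak $(1,1)$ bound, a direct $L^{\infty}\to\mathrm{RBMO}$ estimate, then non-homogeneous interpolation for sublinear operators), but two points are glossed over. First, the properties you list for the decomposition are those of the \emph{doubling} case: in Tolsa's decomposition the bad pieces are $b_i = fw_i - \varphi_i$ with $\varphi_i$ supported on the smallest $(6,6^{m+1})$-doubling cube $R_i \supset Q_i$, not on $Q_i$; the cancellation argument only applies on $(4R_i)^c$, and the remaining region $4R_i\setminus 2Q_i$ must be handled without cancellation, using the size estimate together with the fact that there are no doubling cubes $6^kQ_i$ strictly between $6Q_i$ and $R_i$. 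Second, membership in $\mathrm{RBMO}(\mu)$ requires, besides the oscillation bound with constants $c_Q$, the compatibility condition $|c_B - c_R| \lesssim 1 + \int_{CR\setminus B}|y-a|^{-m}\,d\mu(y)$ for nested balls $B\subset R$; the paper verifies this via a chain of intermediate balls, and your plan omits it. Also, the interpolation used for $2<p<\infty$ should be one valid for \emph{sublinear} operators (the paper invokes Lin--Yang), since $S$ is not linear.

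The genuine gap is the counterexample in part (V). A kernel of the form $s_t(x,y)=\eps_t(x)\,\psi_t(x-y)$ with $|\eps_t(x)|=1$ cannot work: since $Vf(x)^2=\int_0^{\infty}|\theta_tf(x)|^2\,dt/t$ and $|\theta_tf(x)|=|\eps_t(x)|\,|\psi_t\ast f(x)|=|\psi_t\ast f(x)|$, the unimodular factor is invisible to $V$, so your operator is identically the classical $g$-function, which is bounded on every $L^p$, $1<p<\infty$; there is no sign pattern to ``calibrate against duality'' because the square function takes absolute values scale by scale. What the paper exploits instead is $x$-dependence in the \emph{set of active scales}: it constructs $f\colon[0,1]\to(0,1]$ with $\int_I \ln^{+}(\ell(I)/f)\,dt\lesssim\ell(I)$ for all intervals $I$ but $\ln(1/f)\notin L^p$ for any $p>1$, and sets $s_t(x,y)=\varphi_t(x-y)$ only when $f(x)\le t\le1$ (zero otherwise). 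The Carleson-type condition on $\ln(1/f)$ gives $L^2$-boundedness via the $T1$-type criterion of \cite{MM}, while $V(1_{[0,1]})(x)\gtrsim(\ln(1/f(x)))^{1/2}$ fails to lie in $L^p$ for every $p>2$ simultaneously, as the statement requires (a single operator, not a family indexed by $p$). Your proposal would need to be replaced by a construction of this type, where the lack of $x$-regularity is used to modulate \emph{when} the kernel acts rather than with what sign.
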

There are some previous results in the literature similar to Theorem \ref{LpSF}(S). First, for doubling measures $\mu$, the first part of our theorem follows from the work of Harboure, Torrea and Viviani, the accurate citation being
Theorem 4.4 of \cite{HTV}. The strategy in \cite{HTV} is to interpret $S$ as a vector-valued Calder\'on-Zygmund operator, after which the extension from $L^{2}$ to $L^{p}$ and the endpoints follows immediately from the well-developed theory for such operators. Curiously, in the world of doubling measures, such an interpretation can be made even without assuming the first variable continuity of the kernel $s_{t}$. And, indeed, no such assumption is made in \cite{HTV}. 

However, the words 'in the world of doubling measures' cannot be dispensed with. In fact, the lack of first variable continuity is compensated by the well-known observation that the aperture of the cone $\Gamma(x)$ does not crucially affect the $L^{2}$-norm of $Sf$. As a consequence, one may safely replace indicator of the cone $\Gamma(x)$ by a slightly larger smooth approximation, without losing the $L^{2}$-boundedness of the operator in the process. This trick is used extensively in the paper \cite{HTV}, including the proof of \cite[Theorem 4.4]{HTV}. 

In the non-homogeneous world, the trick is no longer available. In fact, we show that the change of aperture technique, used widely in connection with tent spaces, fails with general measures.
This is the content of the paper's third theorem. 
\begin{thm}\label{noTricks} There exist a Borel probability measure $\mu$ and a square function operator $S$ on $\R$ with the following properties. 
\begin{itemize}
\item[(i)] The measure $\mu$ and the kernel of the operator $S$ satisfy the assumptions \eqref{powerBound}, \eqref{eq:size} and \eqref{eq:hol} for some $0 < m < 1$. 
\item[(ii)] The operator $S$ is bounded on $L^{2}(\mu)$, but $S_{\alpha}(1) \notin L^{2}(\mu)$ for any $\alpha > 1$, where
\begin{displaymath} S_{\alpha}f(x) = \Big(\iint_{\Gamma_{\alpha}(x)} |\theta_{t}f(y)|^{2} \, \frac{d\mu(y)dt}{t^{m + 1}}\Big)^{1/2} \end{displaymath}
is the square function associated with the cones 
\begin{displaymath} \Gamma_{\alpha}(x) = \{(y,t) \in \R \times \R_{+} : |y - x| <  \alpha t\}. \end{displaymath}
\end{itemize}
\end{thm}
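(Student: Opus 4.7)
The proof proceeds by explicit construction. The starting point is the Fubini identity
\[
\|S_\alpha f\|_{L^2(\mu)}^2 = \iint |\theta_t f(y)|^2\, \mu(B(y, \alpha t))\, \frac{d\mu(y)\, dt}{t^{m+1}},
\]
from which one sees that any gap between $\|S f\|_{L^2}$ and $\|S_\alpha f\|_{L^2}$ must come from a gap between $\mu(B(y, t))$ and $\mu(B(y, \alpha t))$ on the support of $\theta_t 1$. In a Cantor-type (non-doubling) setting, the function $r \mapsto \mu(B(y, r))$ exhibits sharp relative jumps as $r$ crosses a gap width, which is the mechanism the construction will exploit.

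I would take $\mu$ to be a Cantor-type probability measure on $[0, 1]$, built by iteratively replacing each generation-$(k-1)$ interval of length $L_{k-1}$ by two children of length $L_k = \lambda_k L_{k-1}$ separated by a gap $g_{k-1} = L_{k-1} - 2 L_k$, and assigning mass $2^{-k}$ to each generation-$k$ interval. The ratios $(\lambda_k)$ are chosen so that \eqref{powerBound} holds for a fixed $m \in (0, 1)$; along a sparse subsequence of \emph{bad} generations $k_\ell \uparrow \infty$, the ratios (or a measure-asymmetry between the two children) are further tuned to activate the non-doubling jumps. The kernel $s_t$ is then a superposition of localized bumps, one per parent interval $I$ at generation $k_\ell - 1$:
\[
s_t(x, y) = \sum_\ell A_\ell \sum_I \phi\!\left(\tfrac{x - y_I}{r_\ell}\right) \tau\!\left(\tfrac{t - t_\ell}{\delta_\ell}\right) h_I(y),
\]
where $y_I \in K$ is a strategically chosen point near the gap of $I$, $t_\ell$ is slightly below $g_{k_\ell - 1}$, $h_I$ is a smoothed multiple of $\chi_I$ with $\int h_I\, d\mu = 1$, and $\phi, \tau$ are smooth unit bumps. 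The amplitudes and widths $(A_\ell, r_\ell, \delta_\ell)$ are tuned so that \eqref{eq:size} and \eqref{eq:hol} hold.

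For the aperture-$1$ operator, each bump at $(y_I, t_\ell)$ affects $Sf$ only for apex points near $y_I$. The Carleson testing condition $\iint_{\widehat Q} |\theta_t 1|^2\, d\mu\, dt/t \lesssim \mu(3Q)$ from the $Tb$ theorem of \cite{MM} (applied with $b \equiv 1$) reduces to summing bump contributions inside $\widehat Q$, and is guaranteed by the near-disjointness of supports and the amplitudes $A_\ell$; this gives $\|Sf\|_{L^2(\mu)} \lesssim \|f\|_{L^2(\mu)}$. For any fixed $\alpha > 1$ and all $\ell \geq \ell_0(\alpha)$, however, $\alpha t_\ell$ exceeds $g_{k_\ell - 1}$, so $B(y_I, \alpha t_\ell)$ captures additional mass from neighbouring Cantor subintervals. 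Summing the resulting per-bump excess over the $2^{k_\ell - 1}$ parents $I$ at each bad generation and over $\ell \geq \ell_0(\alpha)$, the amplitudes $A_\ell$ can be arranged so that this sum diverges while $\|S 1\|_{L^2}^2 < \infty$.

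The main technical obstacle is the simultaneous tuning of $(\lambda_k, A_\ell, r_\ell, \delta_\ell, t_\ell)$ to secure at once: (i) the power bound \eqref{powerBound} with some $m \in (0, 1)$; (ii) the kernel conditions \eqref{eq:size}, \eqref{eq:hol}; (iii) the Carleson testing condition driving the $L^2(\mu)$-boundedness of $S$; and (iv) $\|S_\alpha 1\|_{L^2(\mu)} = \infty$ for every $\alpha > 1$. Since $\mu$ is non-atomic, $\mu(B(y, \alpha t))/\mu(B(y, t)) \to 1$ pointwise as $\alpha \to 1^+$, so the divergence in (iv) cannot come from any single bump and must instead be engineered as a collective effect spread across infinitely many scales. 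Balancing this against the average-ratio constraint imposed by \eqref{powerBound}---which limits how strongly non-doubling $\mu$ may be---is the delicate heart of the construction, and is resolved by working with a sufficiently sparse sequence of bad generations, interspersed with baseline symmetric Cantor behavior.
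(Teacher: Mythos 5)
Your starting point (the Fubini identity and the idea that the gap between $S$ and $S_{\alpha}$ must come from a jump of $r\mapsto\mu(B(y,r))$ precisely on the support of $\theta_{t}1$) is the right one, and your kernel design (bumps tied to scales just below gap widths) is in the same spirit as the paper's. But two steps of the plan would fail as written. First, the route to the $L^{2}(\mu)$-boundedness of $S$ is self-defeating: the testing condition $\iint_{\widehat Q}|\theta_{t}1|^{2}\,d\mu\,dt/t\lesssim\mu(3Q)$ with $b\equiv 1$ is the $Tb$ condition of \cite{MM} for the \emph{vertical} operator $V$, and if it held then $V$ would be bounded on $L^{2}(\mu)$; since $\mu(B(y,\alpha t))\lesssim(\alpha t)^{m}$ by \eqref{powerBound}, the Fubini identity then gives
\begin{displaymath}
\|S_{\alpha}f\|_{L^{2}(\mu)}^{2}=\iint|\theta_{t}f(y)|^{2}\,\frac{\mu(B(y,\alpha t))}{t^{m}}\,d\mu(y)\frac{dt}{t}\lesssim_{\alpha}\|Vf\|_{L^{2}(\mu)}^{2},
\end{displaymath}
so $S_{\alpha}(1)\in L^{2}(\mu)$ for every $\alpha$, contradicting the conclusion you are trying to reach (Corollary \ref{noTricksCor} records exactly this: in any valid example $V(1)\notin L^{2}(\mu)$, so that testing condition must fail). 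The testing one may legitimately verify is the one for the modified kernel $\tilde s_{t}=(\mu(B(x,t))/t^{m})^{1/2}s_{t}$; the paper in fact bypasses $Tb$ entirely and bounds $\|Sf\|_{L^{2}}^{2}$ directly from the Fubini identity, using the smallness of $\mu(B(y,t))$ on the kernel's support.

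Second, and more fundamentally, the measure you describe cannot work: with equal mass $2^{-k}$ for every generation-$k$ interval and children of length $\lambda_{k}L_{k-1}$ with $2\lambda_{k}<1$, lengths shrink at least by a factor $2$ per generation while masses shrink exactly by $2$, so the measure is doubling \emph{on its support}: for $y\in\operatorname{spt}\mu$, passing from radius $r$ to $\alpha r$ crosses at most about $\log_{2}\alpha$ construction levels, and any mass picked up across a gap is an edge piece of the (identically distributed) sibling whose mass is dominated, up to a factor $C_{\alpha}$, by what $B(y,r)$ already contains. Hence $\mu(B(y,\alpha r))\le C_{\alpha}\mu(B(y,r))$ uniformly, and by the same Fubini identity $\|S_{\alpha}1\|_{L^{2}(\mu)}^{2}\le C_{\alpha}\|S1\|_{L^{2}(\mu)}^{2}$: no tuning of $A_{\ell},r_{\ell},\delta_{\ell},t_{\ell}$ or sparse bad generations can make one form diverge while the other converges, because they are pointwise comparable on the support of $\theta_{t}1$. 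The missing idea, which is the heart of the paper's construction, is a strongly \emph{asymmetric} mass assignment making the ratio $\mu(B(y,\alpha t))/\mu(B(y,t))$ unbounded exactly where the kernel lives: the paper uses four children per interval, gives the two middle ones only the fraction $1/(Cn)$ of the parent's mass at generation $n$, supports $s_{t}(\cdot,y)$ in the $x$-variable on those light intervals at scales $t\sim L_{I}$, and places the heavy outer intervals at distance exactly $L_{I}$; then $\mu(B(y,t))\le t^{m}/(n+1)$ while $\mu(B(y,\alpha t))\gtrsim_{\alpha}t^{m}$, yielding $\|Sf\|_{L^{2}}^{2}\lesssim\sum_{n}(n+1)^{-2}\|f\|_{L^{2}}^{2}$ but $\|S_{\alpha}1\|_{L^{2}}^{2}\gtrsim_{\alpha}\sum_{n}(n+1)^{-1}=\infty$. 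Your parenthetical mention of a ``measure-asymmetry'' gestures toward this, but without a quantitative design (a ratio tending to infinity along the relevant scales, with a convergent-versus-divergent dichotomy compatible with \eqref{powerBound}, \eqref{eq:size} and \eqref{eq:hol}) the proposal does not yield the theorem.
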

This example actually has the following interesting corollary that $S$ can be bounded in $L^2(\mu)$ even if $V$ is not.
\begin{cor}\label{noTricksCor} If $V$ is the vertical square function operator associated with the same kernel and measure as $S$ from Theorem \ref{noTricks}, then $V(1) \notin L^{2}(\mu)$. In particular, the boundedness of $S$ on $L^{2}(\mu)$ does not imply the boundedness of $V$ on $L^{2}(\mu)$. 
\end{cor}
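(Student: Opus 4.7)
The plan is to deduce the corollary from Theorem~\ref{noTricks}(ii) via the elementary norm comparison $\|S_{\alpha}f\|_{L^{2}(\mu)}^{2} \lesssim \alpha^{m}\|Vf\|_{L^{2}(\mu)}^{2}$, which holds for any power-bounded $\mu$, any aperture $\alpha > 0$, and any nonnegative measurable $f$ (in particular for $f \equiv 1$, where both sides are a priori allowed to be infinite). Once such an inequality is in hand, a conical square function $S_{\alpha}$ whose value at the constant $1$ fails to lie in $L^{2}(\mu)$ forces the vertical analogue $V$ to fail there as well, and the corollary follows.

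To establish the comparison, I would begin with
\[
\|S_{\alpha}f\|_{L^{2}(\mu)}^{2} = \int_{\R^{n}} \iint_{\Gamma_{\alpha}(x)} |\theta_{t}f(y)|^{2}\, \frac{d\mu(y)\,dt}{t^{m+1}}\, d\mu(x)
\]
and swap the order of integration via Fubini. Because $(y,t) \in \Gamma_{\alpha}(x)$ holds precisely when $x \in B(y,\alpha t)$, the outer $x$-integral collapses to the mass $\mu(B(y,\alpha t))$, giving
\[
\|S_{\alpha}f\|_{L^{2}(\mu)}^{2} = \iint_{\R^{n} \times (0,\infty)} |\theta_{t}f(y)|^{2}\, \mu(B(y,\alpha t))\, \frac{d\mu(y)\,dt}{t^{m+1}}.
\]
The power-boundedness assumption~\eqref{powerBound} then yields $\mu(B(y,\alpha t)) \lesssim \alpha^{m} t^{m}$, and what remains is a constant times
\[
\iint_{\R^{n} \times (0,\infty)} |\theta_{t}f(y)|^{2}\, \frac{d\mu(y)\,dt}{t} = \|Vf\|_{L^{2}(\mu)}^{2}.
\]

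Specialising this to $f \equiv 1$ and invoking Theorem~\ref{noTricks}(ii), which provides some (indeed every) $\alpha > 1$ with $S_{\alpha}(1) \notin L^{2}(\mu)$, the comparison instantly forces $V(1) \notin L^{2}(\mu)$. The second assertion of the corollary is then immediate: since $\mu$ in Theorem~\ref{noTricks} is a probability measure, the constant function $1$ belongs to $L^{2}(\mu)$, so $L^{2}(\mu)$-boundedness of $V$ would give $V(1) \in L^{2}(\mu)$, contradicting what we just proved, while $S$ is $L^{2}(\mu)$-bounded by Theorem~\ref{noTricks}(ii) itself. There is no genuine obstacle here: once the correct inequality is identified, the proof reduces to a single application of Fubini together with the power bound.
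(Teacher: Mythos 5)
Your argument is correct and coincides with the paper's own proof: the paper likewise writes $\|S_{\alpha}(1)\|_{L^{2}(\mu)}^{2} = \iint |\theta_{t}1(y)|^{2}\,\mu(B(y,\alpha t))\,t^{-m}\,d\mu(y)\,\frac{dt}{t} \lesssim_{\alpha} \|V(1)\|_{L^{2}(\mu)}^{2}$ via Fubini and the power bound \eqref{powerBound}, and then invokes Theorem \ref{noTricks}(ii). No changes needed.
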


In the end, the proof of Theorem \ref{LpSF}(S) is more straightforward than that of \cite[Theorem 4.4]{HTV}. The strategy here is simply to establish the valid endpoint results directly (using Tolsa's non-homogeneous Calder\'on-Zygmund decomposition \cite{To2} in the $L^{1} \to L^{1,\infty}$ end) and then apply a recent interpolation theorem for sublinear operators by H. Lin and D. Yang \cite{LY} to obtain boundedness on the full range of exponents. A similar approach was used recently in \cite{XZ} to obtain an analogue of Theorem \ref{LpSF} for certain Littlewood-Paley operators (with $x$-continuous kernels) in the non-homogeneous setting. 

We finish by making a note about the sharp $A_p$ theory for $S$ assuming that the measure $\mu$ is doubling. Even with doubling measures \textbf{sharp} weighted theory is essentially non-homogeneous analysis, since the doubling property of the weight cannot certainly be used. There exists closely related sharp weighted theory, but in the literature all of it is written for convolution type square functions.
Indeed, the sharp bound in the convolution case was first established by A. Lerner \cite{Le1}. This proof was heavily based on the usage of the intrinsic square function $G$
of J.M. Wilson \cite{Wi}. As such, the proof relies on the convolution structure. In a very recent work \cite{Le2} Lerner developed new techniques, which do not rely on the intrinsic square function $G$, in order to prove
sharp aperture-weighted estimates for square functions of convolution type. To complete our study of general square functions, we take this opportunity to record that these current techniques can also be used to obtain sharp weighted theory in our setting.
\begin{thm}\label{weighted}
If $\mu$ is doubling and $S$ is of weak-type $(1,1)$, then for any $p \in (1,\infty)$ there holds that
$$ \|Sf\|_{L^p(w)} \lesssim_{n,p} [w]_{A_p}^{\max{(\frac{1}{2},\frac{1}{p-1}})} \|f\|_{L^p(w)},$$
where $$[w]_{A_p}=\sup_Q \left(\fint_Q w \,d\mu \right) \left(\fint_Q w^{-\frac{1}{p-1}} \,d\mu \right)^{p-1} .$$
\end{thm}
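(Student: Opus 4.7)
The plan is to adapt A.~Lerner's approach from \cite{Le2} to our non-convolution setting. The strategy is to first establish a pointwise sparse bound for $Sf$ (or more naturally, for $(Sf)^2$) by suitable averages of $|f|^2$ over a sparse family of cubes, and then invoke the sharp weighted $L^p(w)$-estimates for such sparse operators. The doubling of $\mu$ ensures the availability of the standard tools (Christ-type dyadic lattices, $L^p(\mu)$-boundedness of the Hardy--Littlewood maximal operator $M_\mu$, Lerner's local mean oscillation formula), while the weak-type $(1,1)$ hypothesis on $S$ plays the role that the Calder\'on--Zygmund structure plays in the convolution case of \cite{Le2}.

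The main step is the pointwise bound
\[
Sf(x)^2 \lesssim \sum_{Q \in \mathcal{S}} \langle |f|^2 \rangle_{\mu,Q}\, \chi_Q(x) \qquad (\mu\text{-a.e.\ } x),
\]
valid for each compactly supported $f$ and some $\tfrac12$-sparse family $\mathcal{S}$ of Christ cubes depending on $f$. Applying Lerner's local mean oscillation formula to $Sf$, this reduces to the oscillation estimate
\[
\omega_\lambda(Sf; Q) \lesssim \Bigl(\inf_{y \in Q} M_\mu(|f|^2)(y)\Bigr)^{1/2}
\]
for a fixed small $\lambda \in (0,1)$. To prove the latter, I would split $f = f\chi_{cQ} + f\chi_{(cQ)^c}$ for a large constant $c$. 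The local piece is controlled via weak-$(1,1)$ of $S$ combined with Kolmogorov's inequality, Cauchy--Schwarz, and the doubling of $\mu$: these together give $\omega_\lambda(S(f\chi_{cQ}); Q) \lesssim \mu(Q)^{-1}\|f\chi_{cQ}\|_{L^1(\mu)} \lesssim (M_\mu(|f|^2)(y))^{1/2}$ for any $y \in Q$. The far piece is handled by decomposing over dyadic annuli around $Q$, using the kernel size bound \eqref{eq:size} to estimate $|\theta_t(f\chi_{(cQ)^c})(y)|$ pointwise by a constant multiple of $(t/\ell(Q))^\alpha M_\mu f(y)$ for $y$ close to $Q$, and exploiting the geometric observation that the integration regions for $S(f\chi_{(cQ)^c})(x)$ and $S(f\chi_{(cQ)^c})(x')$ for $x, x' \in Q$ differ only in a small set. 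Crucially, no first-variable continuity of $s_t$ is required.

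Given the sparse bound, the theorem follows from the sharp weighted $L^p(w)$-estimate for the corresponding sparse (square function) operator, which yields the exponent $\max(1/2, 1/(p-1))$; see \cite{Le2} for the Lebesgue/convolution case, which transfers with only cosmetic changes to the doubling measure $\mu$. The main anticipated obstacle is the oscillation estimate of Step 1, specifically the far-piece bound: one must sum contributions over dyadic annuli with respect to $\mu$ and carefully compare the cones $\Gamma(x)$, $\Gamma(x')$ for $x, x' \in Q$, keeping track of doubling constants. While each individual step is routine, the bookkeeping is more involved than in the Lebesgue/convolution case, and recording that Lerner's argument goes through in this generality is precisely the content of the theorem.
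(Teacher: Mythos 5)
Your overall skeleton (Lerner's local mean oscillation method, weak-type $(1,1)$ plus Kolmogorov for the local piece, kernel size plus annuli plus a comparison of cones for the far piece, then the sharp bound for the resulting sparse object) is the same as the paper's, but the specific form of your dominating quantity is wrong, and this is not cosmetic. You propose $Sf(x)^2 \lesssim \sum_{Q\in\mathcal S}\langle |f|^2\rangle_{\mu,Q}1_Q(x)$, i.e.\ domination by \emph{averages of $|f|^2$}. The sharp exponent $\max(\tfrac12,\tfrac1{p-1})$ comes from dominating $(Sf)^2$ by \emph{squares of first-power averages}, $\sum_{Q\in\mathcal S}\bigl(\fint_{Q}|f|\,d\mu\bigr)^2 1_Q$ (possibly with geometrically decaying tails), as in \cite{Le2}. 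With your version, the natural conclusion for $p>2$ is $\|Sf\|_{L^p(w)}^2=\|(Sf)^2\|_{L^{p/2}(w)}\lesssim [w]_{A_{p/2}}^{\max(1,\frac{2}{p-2})}\|f\|_{L^p(w)}^2$, which is a statement about the strictly smaller class $A_{p/2}$ with a worse exponent, and for $1<p\le 2$ the sparse $L^{p/2}(w)$ estimate is not even available since $p/2\le 1$; so the claimed theorem does not follow. The loss enters exactly where you apply Cauchy--Schwarz in the local part: Kolmogorov plus weak $(1,1)$ gives the oscillation bound $\lesssim \fint_{cQ}|f|\,d\mu$, and you should keep this first-power average rather than upgrade it to $(M_\mu(|f|^2))^{1/2}$. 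A secondary gap: you apply the oscillation formula to $Sf$ and then square; squaring a sum of (possibly nested) sparse terms is not termwise, so this does not yield your claimed bound for $(Sf)^2$. The clean fix, and what the paper does, is to run the oscillation estimate directly on $(Sf)^2$: the key lemma there is
\begin{equation*}
\osc{\lambda}\bigl((Sf)^2;Q\bigr)\lesssim_{n,\lambda}\sum_{k\ge 0}2^{-k\eps}\Bigl(\fint_{2^kQ}|f|\,d\mu\Bigr)^2,
\end{equation*}
proved by splitting the $t$-integration into $t\le 2\ell(Q)$ (handled with weak $(1,1)$, Chebyshev and the size condition) and dyadic ranges $t\sim 2^k\ell(Q)$, where the dependence on $x\in Q$ is only through the cone and is controlled by inserting a Lipschitz cut-off $\phi(|x-y|/t)$, gaining the factor $|x-c_Q|/t\lesssim 2^{-k}$ (your ``regions differ only in a small set'' heuristic needs to be made quantitative in exactly this way to produce summable decay). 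Given that lemma, the conclusion follows by the argument of Theorem 1.1 of \cite{Le2}, which is where the exponent $\max(\tfrac12,\tfrac1{p-1})$ for all $p\in(1,\infty)$ actually comes from.
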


\section{Local $Tb$ theorem}
In this section we prove Theorem \ref{localTb} -- that is, a general local $Tb$ theorem for our square functions. To this end,
assume that to every cube $Q \subset \R^n$ there is associated a function $b_Q$ which satisfies:
\begin{enumerate}
\item spt$\,b_Q \subset Q$;
\item $|\langle b_Q \rangle_Q| \gtrsim 1$;
\item $\|b_Q\|_{L^{\infty}(\mu)} \lesssim 1$;
\item $\sup_{R \subset \R^n \textup{ cube}} \mu(3R)^{-1} \iint_{\widehat R} |\theta_t b_Q(x)|^2 d\mu(x) dt/t \lesssim 1$.
\end{enumerate}
We will prove that this implies the square function bound
\begin{equation}\label{eq:SFbound}
\iint_{\R^{n+1}_+} |\theta_t f(x)|^2\,d\mu(x)\frac{dt}{t} \lesssim \|f\|_{L^2(\mu)}^2, \qquad f \in L^2(\mu).
\end{equation}

\subsection{Stopping times and the martingale difference operators $\Delta_Q$}
Let $\mathcal{D}$ be a dyadic system of cubes.
Let $s \in \N$ be an arbitrary large index and $Q_0 \in \mathcal{D}$ be a fixed cube with $\ell(Q_0) = 2^s$.
Let $\mathcal{D}^0 = \{Q_0\}$.

Let $\mathcal{D}^1 = \{Q^k_1\}_k$ consist of the maximal $\mathcal{D}$-cubes $Q \subset Q_0$ for which there holds
\begin{displaymath}
\Big| \int_Q b_{Q_0}\,d\mu \Big| < c\mu(Q).
\end{displaymath}
Here $c$ is a fixed small enough constant.
It follows that
\begin{displaymath}
\mu\Big( \bigcup_k Q^k_1 \Big) \le \tau \mu(Q_0)
\end{displaymath}
for some $\tau < 1$.

Next, fix a cube $Q^k_1$ and consider all the maximal $\mathcal{D}$-cubes $Q \subset Q^k_1$ for which there holds
\begin{displaymath}
\Big| \int_Q b_{Q^k_1}\,d\mu \Big| < c\mu(Q).
\end{displaymath}
We do this for every $Q^k_1 \in \mathcal{D}^1$, and call the resulting collection of cubes $\mathcal{D}^2 = \{Q^k_2\}_k$. We proceed to obtain collections
$\mathcal{D}^j$ for every $j$. For every $Q \in \mathcal{D}^j$ there holds
\begin{displaymath}
\mu\Big( \bigcup_{Q' \in \mathcal{D}^{j+1}, \, Q' \subset Q} Q' \Big) \le \tau\mu(Q).
\end{displaymath}

For every $Q \subset Q_0$ we let $Q^a$ be the smallest cube in the family $\bigcup \mathcal{D}^j$ containing $Q$. Note that if $Q \subset Q_0$ is such that
$Q^a \in \mathcal{D}^t$, there holds for every $j \ge 1$ that
\begin{displaymath}
\mu\Big( \bigcup_{Q' \in \mathcal{D}^{t+j}, \, Q' \subset Q} Q'\Big) = \sum_{Q' \in \mathcal{D}^{t+j}, \, Q' \subset Q} \mu(Q') \le \tau^{j-1}\mu(Q).
\end{displaymath}
The next lemma follows.
\begin{lem}\label{lem:mescar}
The following is a Carleson sequence: $\alpha_Q = 0$ if $Q$ is not from $\bigcup_j \mathcal{D}^j$, and it equals $\mu(Q)$ otherwise.
This means that $\sum_{Q \subset R} a_Q \lesssim \mu(R)$ for every dyadic $R$.
\end{lem}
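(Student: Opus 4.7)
The plan is to extract the Carleson bound directly from the geometric contraction recorded just above the lemma, namely that for any $Q \subset Q_0$ whose smallest enveloping stopping cube $Q^a$ lies in $\mathcal{D}^t$, one has
\begin{equation*}
\sum_{Q' \in \mathcal{D}^{t+j},\, Q' \subset Q} \mu(Q') \le \tau^{j-1}\mu(Q), \qquad j \ge 1.
\end{equation*}
Once this is granted, only a short case analysis on where $R$ sits inside the stopping tree and a geometric series are required. There is no real obstacle, since all the combinatorial work has already gone into the iterated selection producing $\bigcup_j \mathcal{D}^j$; the lemma merely records that a multiplicative loss of $\tau < 1$ per generation yields a Carleson sequence when summed.

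First I would handle the trivial positions of $R$. If $R \cap Q_0 = \emptyset$ the sum is empty, while if $R \supsetneq Q_0$ every cube of $\bigcup_j \mathcal{D}^j$ sits inside $Q_0$, so the sum over $Q \subset R$ agrees with the sum over $Q \subset Q_0$; using $\mu(Q_0) \le \mu(R)$ this reduces to the case $R = Q_0$, which is itself a special case of the analysis to follow. Hence I may assume $R \subset Q_0$.

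Now let $R^a \in \mathcal{D}^t$ denote the smallest cube in $\bigcup_j \mathcal{D}^j$ containing $R$. Because any two stopping cubes are either disjoint or nested (they form a tree rooted at $Q_0$), every $Q \in \bigcup_j \mathcal{D}^j$ with $Q \subset R$ is forced into $R^a$ and therefore belongs to $\mathcal{D}^{t+j}$ for some $j \ge 0$; the borderline case $j = 0$ forces $Q = R^a = R$. Invoking the contraction estimate with the choice $Q := R$ gives
\begin{equation*}
\sum_{j \ge 1} \sum_{Q \in \mathcal{D}^{t+j},\, Q \subset R} \mu(Q) \le \sum_{j \ge 1} \tau^{j-1} \mu(R) = \frac{\mu(R)}{1-\tau},
\end{equation*}
and adding the $j = 0$ contribution (equal to $\mu(R)$, and present only when $R = R^a$) produces
\begin{equation*}
\sum_{Q \subset R} \alpha_Q \le \Big(1 + \frac{1}{1-\tau}\Big)\mu(R),
\end{equation*}
which is precisely the Carleson bound asserted by the lemma.
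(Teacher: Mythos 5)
Your proof is correct and is exactly the argument the paper intends: the paper states the contraction estimate $\sum_{Q' \in \mathcal{D}^{t+j},\, Q' \subset Q} \mu(Q') \le \tau^{j-1}\mu(Q)$ and simply remarks that the lemma follows, which is precisely your summation of the geometric series in $j$ (plus the routine case analysis on the position of $R$ and the $j=0$ term). Nothing further is needed.
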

Given a cube $Q$ let ch$(Q)$ consist of those cubes $Q' \subset Q$ for which $\ell(Q') = \ell(Q)/2$.
We define
\begin{equation}\label{eq:mdo}
\Delta_Q f = \sum_{Q' \in \, \textrm{ch}(Q)} \Big[\frac{\langle f \rangle_{Q'}}{\langle b_{(Q')^a}\rangle_{Q'}}b_{(Q')^a} - \frac{\langle f \rangle_Q}{\langle b_{Q^a}\rangle_Q}b_{Q^a}\Big]1_{Q'}.
\end{equation}

\subsection{Random dyadic grids}
At this point we need to insert a standard disclaimer about random dyadic grids (these facts are essentially presented in this way by Hyt\"onen in \cite{Hy}).
To this end, let us be given a random dyadic grid $\mathcal{D} = \mathcal{D}(w)$, $w = (w_i)_{i \in \Z} \in (\{0,1\}^n)^{\Z}$.
This means that $\mathcal{D} = \{Q + \sum_{i:\, 2^{-i} < \ell(Q)} 2^{-i}w_i: \, Q \in \mathcal{D}_0\} = \{Q + w: \, Q \in \mathcal{D}_0\}$, where we simply have defined
$Q + w := Q + \sum_{i:\, 2^{-i} < \ell(Q)} 2^{-i}w_i$. Here $\mathcal{D}_0$ is the standard dyadic grid of $\R^n$.

We set $\gamma = \alpha/(2m+2\alpha)$, where $\alpha > 0$ appears in the kernel estimates and $m$ appears in $\mu(B(x,r)) \lesssim r^m$.
A cube $Q \in \mathcal{D}$ is called bad if there exists another cube $\tilde Q \in \mathcal{D}$ so that $\ell(\tilde Q) \ge 2^r \ell(Q)$ and $d(Q, \partial \tilde Q) \le \ell(Q)^{\gamma}\ell(\tilde Q)^{1-\gamma}$.
Otherwise it is good.
One notes that $\pi_{\textrm{good}} := \mathbb{P}_{w}(Q + w \textrm{ is good})$ is independent of $Q \in \mathcal{D}_0$. The parameter $r$ is a fixed constant so large that $\pi_{\textrm{good}} > 0$
and $2^{r(1-\gamma)} \ge 3$.

Furthermore, it is important to note that for a fixed $Q \in \mathcal{D}_0$
the set $Q + w$ depends on $w_i$ with $2^{-i} < \ell(Q)$, while the goodness (or badness) of $Q + w$ depends on $w_i$ with $2^{-i} \ge \ell(Q)$. In particular, these notions are independent (meaning that
for any fixed $Q \in \mathcal{D}_0$ the random variable $w \mapsto 1_{\textup{good}}(Q+w)$ and any random variable that depends only on the cube $Q+w$ as a set, like $w \mapsto \int_{Q+w} f\,d\mu$, are independent).

\subsection{Beginning of the proof of the local $Tb$ theorem}
Fix a compactly supported function $f$.
Like in p. 3 of \cite{MM} we begin by writing the identity
\begin{align*}
\iint_{\R^{n+1}_+} |\theta_t f(x)|^2\,d\mu(x)\frac{dt}{t} = 
\frac{1}{\pi_{\textrm{good}}} E_w \sum_{R \in \mathcal{D}(w)_{\textup{good}}} \iint_{W_R} |\theta_t f(x)|^2\,d\mu(x)\frac{dt}{t},
\end{align*}
where $W_R = R \times (\ell(R)/2, \ell(R))$ is the Whitney region associated with $R \in \mathcal{D} = \mathcal{D}(w)$. This is based on the facts that for every fixed $R \in \mathcal{D}_0$
the random variables $1_{\textup{good}}(R+w)$ and $\iint_{W_{R+w}} |\theta_t f(x)|^2\,d\mu(x)\frac{dt}{t}$ are independent, and that we have $\pi_{\textrm{good}} = \mathbb{P}_{w}(R + w \textrm{ is good}) = E_w 1_{\textup{good}}(R+w)$.

We fix the grid $\mathcal{D} = \mathcal{D}(w)$ i.e. we fix $w$ from the probability space.
It is enough to prove that for any fixed large $s$ there holds that
\begin{displaymath}
\mathop{\sum_{R \in \mathcal{D}_{\textup{good}}}}_{\ell(R) \le 2^s}  \iint_{W_R} |\theta_t f(x)|^2\,d\mu(x)\frac{dt}{t} \lesssim \|f\|_{L^2(\mu)}^2.
\end{displaymath}
Now fix $N \in \N$ such that spt$\ f \subset B(0,2^N)$ and consider any $s \ge N$.
We expand
\begin{displaymath}
f = \mathop{\mathop{\sum_{Q_0 \in \mathcal{D}}}_{\ell(Q_0) = 2^s}}_{Q_0 \cap B(0, 2^N) \ne \emptyset} \mathop{\sum_{Q \in \mathcal{D}}}_{Q \subset Q_0} \Delta_Q f.
\end{displaymath}
Here $\Delta_Q$, $Q \subset Q_0$, are like in \eqref{eq:mdo}, but on the largest $Q_0$ level we agree (by abuse of notation) that $\Delta_{Q_0} = E_{Q_0} + \Delta_{Q_0}$, where
$E_{Q_0}f = \frac{\langle f \rangle_{Q_0}}{\langle b_{Q_0}\rangle_{Q_0}}b_{Q_0}$. Therefore, we have that $\int \Delta_Qf \,d\mu = 0$ except when $Q  = Q_0$ for some $Q_0$ with $\ell(Q_0) = 2^s$.
Since $\#\{Q_0 \in \mathcal{D}:\, Q_0 \cap B(0,2^N) \ne \emptyset\} \lesssim 1$, we can fix one $Q_0$ with $\ell(Q_0) = 2^s$, and concentrate on proving that
\begin{displaymath}
\mathop{\sum_{R \in \mathcal{D}_{\textup{good}}}}_{\ell(R) \le 2^s}  \iint_{W_R} \Big|\mathop{\sum_{Q \in \mathcal{D}}}_{Q \subset Q_0}\theta_t \Delta_Q f(x)\Big|^2\,d\mu(x)\frac{dt}{t} \lesssim \|f\|_{L^2(\mu)}^2.
\end{displaymath}

Using \cite{MM} heavily we see that we need to only deal with the case $\ell(Q) > 2^r \ell(R)$ and $d(Q,R) \le \ell(R)^{\gamma}\ell(Q)^{1-\gamma}$ presented in Subsection 2.7 of \cite{MM}.
This is because if we follow the splitting of the $Q$ summation from \cite{MM} we see that
the other parts of the summation (Subsections 2.4 to 2.6)  only need the fact that spt$\, \Delta_Q f \subset Q$, $\int \Delta_Q f d\mu = 0$ if $\ell(Q) < 2^s$, and $\sum_Q \|\Delta_Q f\|_{L^2(\mu)}^2 \lesssim \|f\|_{L^2(\mu)}^2$ (for the last
bound one can consult \cite{NTVa}).
\begin{rem}
This is in contrast with the Calder\'on--Zygmund world, where especially the diagonal part is extremely difficult in local $Tb$ theorems. In fact, so much so that Nazarov--Treil--Volberg \cite{NTVa} require an antisymmetric kernel $K$ to cope with the diagonal
part when they assume only BMO type bounds for $Tb_Q$. In our case the same argument as in Subsection 2.6 of \cite{MM} works here, since the diagonal is in fact
pretty trivial because of the rather strong size estimate of the kernels $s_t$. Indeed, the finer structure of the operators $\Delta_Q$ does not matter, since we do not need surgery unlike in \cite{NTVa}, and only use the square function bound $\sum_Q \|\Delta_Q f\|_{L^2(\mu)}^2 \lesssim \|f\|_{L^2(\mu)}^2$.
\end{rem}

In the case $\ell(Q) > 2^r \ell(R)$ and $d(Q,R) \le \ell(R)^{\gamma}\ell(Q)^{1-\gamma}$ one uses the goodness of $R$ to conclude that one must actually have that $R \subset Q$.
Therefore, things reduce to proving that
\begin{align}\label{eq:maineq}
 \mathop{\mathop{\sum_{R \in \mathcal{D}_{\textup{good}}}}_{\ell(R) < 2^{s-r}}}_{R \subset Q_0} \iint_{W_R} \Big| \sum_{k=r+1}^{s+\textup{gen}(R)} \theta_t \Delta_{R^{(k)}} f(x)\Big|^2\,d\mu(x)\frac{dt}{t} \lesssim \|f\|_{L^2(\mu)}^2,
\end{align}
where gen$(R)$ is determined by $\ell(R) = 2^{-\textup{gen}(R)}$, and $R^{(k)} \in \mathcal{D}$ is the unique cube for which
$\ell(R^{(k)}) = 2^k\ell(R)$ and $R \subset R^{(k)}$.

After these reductions the rest of the proof is focused on proving \eqref{eq:maineq}. In what follows we don't always write that now all of the cubes are inside $Q_0$.
\subsection{The case $(R^{(k-1)})^a = (R^{(k)})^a$}
In this case we may write
\begin{align}\label{eq:split1}
\Delta_{R^{(k)}} f = 1_{R^{(k)} \setminus R^{(k-1)}} \Delta_{R^{(k)}}f - 1_{(R^{(k-1)})^c} B_{R^{(k-1)}} b_{(R^{(k)})^a} + B_{R^{(k-1)}}b_{(R^{(k)})^a},
\end{align}
where
\begin{displaymath}
B_{R^{(k-1)}} = \frac{\langle f \rangle_{R^{(k-1)}}}{\langle b_{(R^{(k-1)})^a}\rangle_{R^{(k-1)}}} - \frac{\langle f \rangle_{R^{(k)}} }{\langle b_{(R^{(k)})^a}\rangle_{R^{(k)} }}
\end{displaymath}
with the minus term missing if $\ell(Q) = 2^s$.

If $S \in \textup{ch}(R^{(k)})$, $S \ne R^{(k-1)}$, and $(x,t) \in W_R$, we have by the size estimate \eqref{eq:size} that
\begin{align*}
|\theta_t(1_S \Delta_{R^{(k)}}f)(x)| &\lesssim \int_S \frac{\ell(R)^{\alpha}}{d(S,R)^{m+\alpha}} |\Delta_{R^{(k)}} f(y)|\,d\mu(y) \\
&\lesssim \int_S \Big(\frac{\ell(R)}{\ell(S)}\Big)^{\alpha/2}\frac{1}{\ell(S)^m} |\Delta_{R^{(k)}} f(y)|\,d\mu(y)\\ &\lesssim 2^{-\alpha k/2} \mu(R^{(k-1)})^{-1/2} \|\Delta_{R^{(k)}} f\|_{L^2(\mu)}.
\end{align*}
Here we used that by goodness $d(R, S) \ge \ell(R)^{\gamma}\ell(S)^{1-\gamma}$. Therefore, we have that
\begin{displaymath}
|\theta_t(1_{R^{(k)} \setminus R^{(k-1)}} \Delta_{R^{(k)}}f)(x)| \lesssim 2^{-\alpha k/2} \mu(R^{(k-1)})^{-1/2} \|\Delta_{R^{(k)}} f\|_{L^2(\mu)}, \qquad (x,t) \in W_R.
\end{displaymath}

Accretivity condition gives that
\begin{align*}
|B_{R^{(k-1)}}| \mu(R^{(k-1)}) \lesssim \Big| \int_{R^{(k-1)}} B_{R^{(k-1)}} b_{(R^{(k)})^a}\,d\mu\Big| &= \Big| \int_{R^{(k-1)}} \Delta_{R^{(k)}} f\,d\mu\Big|  \\
&\lesssim \mu(R^{(k-1)})^{1/2}  \|\Delta_{R^{(k)}} f\|_{L^2(\mu)}.
\end{align*}
We also have by the size condition \eqref{eq:size}, and the fact that $\|b_{(R^{(k)})^a}\|_{L^{\infty}(\mu)} \lesssim 1$, that
$|\theta_t( 1_{(R^{(k-1)})^c} b_{(R^{(k)})^a})(x)|$ can be dominated by
\begin{align*}
\ell(R)^{\alpha} \int_{\R^n \setminus B(x, d(R, \R^n \setminus R^{(k-1)}))}  \frac{d\mu(y)}{|x-y|^{m+\alpha}}
\lesssim \ell(R)^{\alpha}d(R, \R^n \setminus R^{(k-1)})^{-\alpha} \lesssim 2^{-\alpha k /2}.
\end{align*}
Here goodness was used to conclude that $d(R, \R^n \setminus R^{(k-1)}) \ge \ell(R)^{1/2}\ell(R^{(k-1)})^{1/2}$. Therefore, we have that
\begin{displaymath}
|\theta_t( 1_{(R^{(k-1)})^c}B_{R^{(k-1)}}  b_{(R^{(k)})^a})(x)| \lesssim 2^{-\alpha k/2} \mu(R^{(k-1)})^{-1/2} \|\Delta_{R^{(k)}} f\|_{L^2(\mu)}, \qquad (x,t) \in W_R.
\end{displaymath}
This is the same bound as for the previous term. We note that
\begin{align*}
\sum_{R:\, \ell(R) < 2^{s-r}} \mu(R) \Big[ \sum_{k=r+1}^{s+\textup{gen}(R)} 2^{-\alpha k /2} \mu(R^{(k-1)})^{-1/2} \|\Delta_{R^{(k)}} f\|_{L^2(\mu)} \Big]^2 \lesssim \|f\|_{L^2(\mu)}^2.
\end{align*}
This is an exercise in summation (or see the very last lines of \cite{MM}). Therefore, the first two terms of the splitting \eqref{eq:split1} are in control. The last term of \eqref{eq:split1}, that is $B_{R^{(k-1)}}b_{(R^{(k)})^a}$, will become part of the paraproduct
to be dealt with later.

\subsection{The case $(R^{(k-1)})^a = R^{(k-1)}$} This time we begin by simply writing
\begin{align*}
\Delta_{R^{(k)}} f = 1_{R^{(k)} \setminus R^{(k-1)}} \Delta_{R^{(k)}}f + 1_{R^{(k-1)}} \Delta_{R^{(k)}}f.
\end{align*}
The first term is in check by the argument above. We then decompose
\begin{align*}
 1_{R^{(k-1)}} \Delta_{R^{(k)}}f = \frac{\langle f \rangle_{R^{(k-1)}}}{\langle b_{R^{(k-1)}}\rangle_{R^{(k-1)}}}&b_{R^{(k-1)}} - \frac{\langle f \rangle_{R^{(k)}} }{\langle b_{(R^{(k)})^a}\rangle_{R^{(k)} }}b_{(R^{(k)})^a} \\
 &+ 1_{(R^{(k-1)})^c} \frac{\langle f \rangle_{R^{(k)}} }{\langle b_{(R^{(k)})^a}\rangle_{R^{(k)} }}b_{(R^{(k)})^a}.
\end{align*}
For the last term we have from above that $|\theta_t( 1_{(R^{(k-1)})^c} b_{(R^{(k)})^a})(x)| \lesssim 2^{-\alpha k /2}$, if $(x,t) \in W_R$.
The term in front is simply estimated using the construction of the stopping time:
\begin{displaymath}
\frac{|\langle f \rangle_{R^{(k)}}| }{|\langle b_{(R^{(k)})^a}\rangle_{R^{(k)} }|} \lesssim |\langle f \rangle_{R^{(k)}}|.
\end{displaymath}
To finish the estimation of this term we may then use the bound
\begin{align*}
\sum_{R:\, \ell(R) < 2^{s-r}} \mu(R) \Big[& \mathop{\sum_{k=r+1}^{s+\textup{gen}(R)}}_{(R^{(k-1)})^a = R^{(k-1)}} 2^{-\alpha k /2} |\langle f \rangle_{R^{(k)}}| \Big]^2 \\
& \lesssim \sum_{S: \, \ell(S) \le 2^s} A_S |\langle f \rangle_S|^2 \lesssim \|f\|_{L^2(\mu)}^2,
\end{align*}
where the last bound follows since \begin{displaymath}
A_S := \mathop{\sum_{S' \in \textup{ch}(S)}}_{(S')^a = S'} \mu(S')
\end{displaymath}
is a Carleson sequence by Lemma \ref{lem:mescar}. The rest will again become part of the paraproduct, which we will deal with in the next subsection.

\subsection{The Carleson estimate for the paraproduct}
Combining the above two cases and collapsing the remaining telescoping summation we have reduced to estimating 
\begin{align*}
\mathop{\mathop{\sum_{R \in \mathcal{D}_{\textup{good}}}}_{\ell(R) < 2^{s-r}}}_{R \subset Q_0}& \iint_{W_R} \Big| \frac{\langle f \rangle_{R^{(r)}}}{\langle b_{(R^{(r)})^a} \rangle_{R^{(r)}}} \theta_t b_{(R^{(r)})^a}(x)
\Big|^2\,d\mu(x)\frac{dt}{t} \\
&\lesssim \sum_S |\langle f \rangle_S|^2 \mathop{\sum_{R \in \mathcal{D}_{\textup{good}}}}_{S = R^{(r)}}\iint_{W_R} |\theta_t b_{S^a}(x)|^2 d\mu(x)\frac{dt}{t} =: \sum_S B_S|\langle f \rangle_S|^2.
\end{align*}
The proof of the estimate \eqref{eq:SFbound}, and thus of the local $Tb$ theorem, Theorem \ref{localTb}, is completed by the next lemma.
\begin{lem}
There holds for every $R \in \mathcal{D}$ that
\begin{displaymath}
\sum_{S \subset R} B_S \lesssim \mu(R).
\end{displaymath}
\end{lem}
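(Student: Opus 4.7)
The plan is to open up $B_S$, exploit the built-in disjointness of the Whitney regions inside a single $B_S$, then regroup the sum over $S$ by the stopping ancestor $P := S^a \in \bigcup_j \mathcal{D}^j$ so that hypothesis (4) can be applied just once per ancestor. Concretely, for fixed $S$ the cubes $R' \in \mathcal{D}_{\textup{good}}$ with $(R')^{(r)} = S$ are pairwise disjoint dyadic cubes all of side length $2^{-r}\ell(S)$; hence the Whitney regions $W_{R'}$ sit disjointly inside the thin strip
\begin{equation*}
\Sigma(S) := S \times \bigl(2^{-r-1}\ell(S),\, 2^{-r}\ell(S)\bigr) \subset \widehat{S},
\end{equation*}
which immediately yields $B_S \le \iint_{\Sigma(S)} |\theta_t b_{S^a}(x)|^2 \, d\mu(x)\,\frac{dt}{t}$.

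I would then group $\sum_{S \subset R} B_S$ according to the value of $P = S^a$. The strips $\Sigma(S)$ belonging to a common ancestor overlap only on sets of measure zero: different scales give disjoint $t$-intervals, and the same scale gives spatially disjoint dyadic cubes. Dyadic nesting yields a clean dichotomy: either $P \subset R$, in which case every contributing $S$ lies inside $P$ and so $\bigcup_S \Sigma(S) \subset \widehat{P}$; or $P \not\subset R$, which forces $P = R^a$, the minimal stopping cube containing $R$, and then $\bigcup_S \Sigma(S) \subset \widehat{R}$. Applying hypothesis (4) to each of these Carleson boxes gives
\begin{equation*}
\sum_{S \subset R} B_S \;\lesssim\; \sum_{\substack{P \in \bigcup_j \mathcal{D}^j \\ P \subset R}} \mu(3P) \;+\; \mu(3R).
\end{equation*}

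The final step collapses the right-hand side to $\mu(R)$. Lemma \ref{lem:mescar} provides the basic packing $\sum_{P \subset R,\, P \in \bigcup_j \mathcal{D}^j}\mu(P) \lesssim \mu(R)$ along stopping cubes, and this is the essential input. The main obstacle, and the only non-bookkeeping step of the proof, is to bridge between the $\mu(3\cdot)$ output of (4) and the $\mu(\cdot)$ input of Lemma \ref{lem:mescar} in the absence of doubling for $\mu$: this has to be done by exploiting the geometric decay $\sum_{P \in \mathcal{D}^{j+1},\,P \subset P_0}\mu(P) \le \tau\mu(P_0)$ coming from the stopping construction together with the power bound \eqref{powerBound}, rather than by a crude cube-by-cube comparison of $\mu(3P)$ and $\mu(P)$. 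Everything else — the strip bound, the stopping-parent regrouping, and the dichotomy — falls out directly from dyadic nesting and the pairwise disjointness of Whitney regions.
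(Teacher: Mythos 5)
Your strip bound and the regrouping by the stopping ancestor $P=S^a$ (including the dichotomy $P\subset R$ versus $P=R^a$) are correct and run parallel to the paper's reduction of the lemma to the two sums handled via \eqref{eq:CarRed}. The genuine gap is exactly the step you defer to the end: from
\begin{displaymath}
\sum_{S \subset R} B_S \;\lesssim\; \sum_{\substack{P \in \bigcup_j \mathcal{D}^j \\ P \subset R}} \mu(3P) \;+\; \mu(3R)
\end{displaymath}
there is no way to reach $\mu(R)$ in the non-doubling setting. The stopping-time decay controls only $\sum_P \mu(P)$ (this is Lemma \ref{lem:mescar}), and the power bound \eqref{powerBound} only gives $\mu(3P)\lesssim \ell(P)^m$, which bears no relation to $\mu(R)$. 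Concretely, a nested chain of stopping cubes $P_1\supset P_2\supset\cdots$ accumulating at a point of $\partial R$ next to which $\mu$ carries a fixed amount of mass just outside $R$ is perfectly compatible with the stopping construction and with the Carleson property of the $\mu(P_j)$, yet every $3P_j$ captures that external mass, so $\sum_j\mu(3P_j)$ can be made arbitrarily large while $\mu(R)$ stays small; likewise $\mu(3R)\lesssim\mu(R)$ fails outright. So the "bridge" you gesture at does not exist along this route: the loss occurs the moment you apply hypothesis (4) over the boxes $\widehat P$ and $\widehat R$ themselves.

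The paper's proof avoids this by never invoking (4) over a stopping cube's own Carleson box. For a fixed test function $b_{H^a}$ it covers the relevant Whitney regions by the boxes $\widehat Q$ with $Q$ in the family $\mathcal{F}(H)$ of \emph{maximal} cubes satisfying $\ell(Q)\le 2^{-r}\ell(H)$ and $d(Q,H^c)\ge 3\ell(Q)$. This covering is available precisely because the small cubes $Q$ with $S=Q^{(r)}$, $S\subset H$, are good, and goodness together with $2^{r(1-\gamma)}\ge 3$ forces $d(Q,H^c)\ge 3\ell(Q)$; note that by passing to the full strip $\Sigma(S)$ you throw away exactly this information. For cubes in $\mathcal{F}(H)$ one has $3Q\subset H$ and $\sum_{Q\in\mathcal{F}(H)}1_{3Q}\lesssim 1_H$, so (4) gives $\sum_{Q\in\mathcal{F}(H)}\mu(3Q)\lesssim\mu(H)$ with no doubling and no comparison of $\mu(3Q)$ with $\mu(Q)$; this is the mechanism by which the factor $3$ in assumption (4) gets absorbed, and it is the missing idea in your argument.
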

\begin{proof}
Fix $R \in \mathcal{D}$. We have that
\begin{align*}
&\sum_{S \subset R} B_S = \sum_{S \subset R} \mathop{\sum_{Q \in \mathcal{D}_{\textup{good}}}}_{S = Q^{(r)}}\iint_{W_Q} |\theta_t b_{S^a}(x)|^2 d\mu(x)\frac{dt}{t} \\
&= \Big(\mathop{\sum_{S \subset R}}_{S^a = R^a} + \mathop{\sum_{H \subset R}}_{H^a = H} \sum_{S:\,S^a = H}\Big)\mathop{\sum_{Q \in \mathcal{D}_{\textup{good}}}}_{S = Q^{(r)}}\iint_{W_Q} |\theta_t b_{S^a}(x)|^2 d\mu(x)\frac{dt}{t}.
\end{align*}
By Lemma \ref{lem:mescar} it is enough to prove that for an arbitrary $H \in \mathcal{D}$ there holds that
\begin{align}\label{eq:CarRed}
J(H) := \mathop{\sum_{S \subset H}}_{S^a = H^a} \mathop{\sum_{Q \in \mathcal{D}_{\textup{good}}}}_{S = Q^{(r)}}\iint_{W_Q} |\theta_t b_{H^a}(x)|^2 d\mu(x)\frac{dt}{t} \lesssim \mu(H).
\end{align}
Indeed, assuming \eqref{eq:CarRed} we can complete the proof by noting that now
\begin{displaymath}
\mathop{\sum_{S \subset R}}_{S^a = R^a} \mathop{\sum_{Q \in \mathcal{D}_{\textup{good}}}}_{S = Q^{(r)}}\iint_{W_Q} |\theta_t b_{R^a}(x)|^2 d\mu(x)\frac{dt}{t} \lesssim \mu(R)
\end{displaymath}
and
\begin{displaymath}
\mathop{\sum_{H \subset R}}_{H^a = H} \sum_{S:\,S^a = H} \mathop{\sum_{Q \in \mathcal{D}_{\textup{good}}}}_{S = Q^{(r)}}\iint_{W_Q} |\theta_t b_{H}(x)|^2 d\mu(x)\frac{dt}{t} \lesssim \mathop{\sum_{H \subset R}}_{H^a = H} \mu(H) \lesssim \mu(R),
\end{displaymath}
where in the last estimate we use Lemma \ref{lem:mescar}.

We will then prove \eqref{eq:CarRed}. To this end, fix an $H \in \mathcal{D}$.
Let $\mathcal{F}(H)$ consist of the maximal cubes $Q$ such that $\ell(Q) \le 2^{-r}\ell(H)$ and $d(Q,H^c) \ge 3\ell(Q)$. We have by goodness (to get the bound $d(Q, H^c) \ge 3\ell(Q)$)
and the property (4) of $b_{H^a}$ that
\begin{align*}
J(H) &\lesssim \mathop{\mathop{\sum_{Q \in \mathcal{D}}}_{\ell(Q) \le 2^{-r}\ell(H)}}_{d(Q,H^c) \ge 3\ell(Q)}\iint_{W_Q} |\theta_t b_{H^a}(x)|^2 d\mu(x)\frac{dt}{t} \\
&\lesssim \sum_{Q \in \mathcal{F}(H)}  \iint_{\widehat Q} |\theta_t b_{H^a}(x)|^2 d\mu(x)\frac{dt}{t} \lesssim  \sum_{Q \in \mathcal{F}(H)} \mu(3Q) \lesssim \mu(H).
\end{align*}
The last estimate follows from $\sum_{Q \in \mathcal{F}(H)} 1_{3Q} \lesssim 1_H$.
\end{proof}

\begin{rem}
The local $Tb$ theorem, Theorem \ref{localTb}, can be proved assuming only that $\mu(B(x,r)) \le \lambda(x,r)$ for some $\lambda\colon \R^n \times (0,\infty) \to (0,\infty)$ satisfying that
$r \mapsto \lambda(x,r)$ is non-decreasing and $\lambda(x, 2r) \le C_{\lambda}\lambda(x,r)$ for all $x \in \R^n$ and $r > 0$. In this case one only needs to replace the kernel estimates by
\begin{displaymath}
|s_t(x,y)| \lesssim \frac{t^{\alpha}}{t^{\alpha}\lambda(x,t) + |x-y|^{\alpha}\lambda(x, |x-y|)}
\end{displaymath}
and
\begin{displaymath}
|s_t(x,y) - s_t(x,z)| \lesssim \frac{|y-z|^{\alpha}}{t^{\alpha}\lambda(x,t) + |x-y|^{\alpha}\lambda(x, |x-y|)}
\end{displaymath}
whenever $|y-z| < t/2$.
\end{rem}

\section{Endpoint and $L^p(\mu)$ theory for $S$ and $V$}

In this section, we prove Theorem \ref{LpSF}.

\subsection{The $L^{\infty}(\mu) \to L^{1,\infty}(\mu)$ bound for $S$ and $V$}
\begin{thm}\label{nonhomweakbound}
If $S\colon L^2(\mu) \to L^2(\mu)$ boundedly, then $S\colon L^1(\mu) \to L^{1,\infty}(\mu)$ boundedly. The same implication holds for $V$.
\end{thm}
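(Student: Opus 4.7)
The plan is to run the classical Calder\'on--Zygmund weak-type argument, with Tolsa's non-homogeneous decomposition from \cite{To2} in place of the usual one. Fix $f \in L^1(\mu)$ and $\lambda > 0$, and decompose $f = g + \sum_i b_i$ at height $\lambda$, where $g$ satisfies $\Norm{g}{L^\infty(\mu)} \lesssim \lambda$ and $\Norm{g}{L^1(\mu)} \lesssim \Norm{f}{L^1(\mu)}$, each $b_i$ is supported in a cube $Q_i$, and
\begin{displaymath}
\sum_i \Norm{b_i}{L^1(\mu)} \lesssim \Norm{f}{L^1(\mu)}, \qquad \sum_i \mu(2Q_i) \lesssim \lambda^{-1}\Norm{f}{L^1(\mu)},
\end{displaymath}
together with the approximate mean-zero condition on $b_i$ provided by \cite{To2}. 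The good part is disposed of by Chebyshev and the $L^2(\mu)$ hypothesis via $\Norm{g}{L^2(\mu)}^2 \le \Norm{g}{L^\infty(\mu)}\Norm{g}{L^1(\mu)}$, which immediately yields $\mu\{Sg > \lambda/2\} \lesssim \lambda^{-1}\Norm{f}{L^1(\mu)}$. Setting $E := \bigcup_i 2Q_i$, so that $\mu(E)$ is already on the right side, the whole matter reduces to the tail estimate
\begin{displaymath}
\int_{\R^n \setminus 2Q_i} Sb_i(x)\,d\mu(x) \lesssim \Norm{b_i}{L^1(\mu)},
\end{displaymath}
summed over $i$ and combined with Chebyshev.

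For the tail estimate I would split the cone $\Gamma(x)$ according to whether $t \gtrless 2\ell(Q_i)$. In the large-scale regime $t > 2\ell(Q_i)$ the smoothness \eqref{eq:hol} is applicable in the source variable, so the cancellation of $b_i$ lets one write $\theta_t b_i(y) = \int_{Q_i} [s_t(y,z) - s_t(y,c_i)] b_i(z)\,d\mu(z)$ with $c_i$ the centre of $Q_i$, and \eqref{eq:hol} gives $|\theta_t b_i(y)| \lesssim \ell(Q_i)^{\alpha}(t+|y-c_i|)^{-m-\alpha}\Norm{b_i}{L^1(\mu)}$. In the small-scale regime $t \le 2\ell(Q_i)$ only the bare size bound \eqref{eq:size} is available, but the constraint $(y,t) \in \Gamma(x)$ with $x \notin 2Q_i$ forces the integrand into a geometrically thin part of the cone, and direct estimation using \eqref{eq:size} and \eqref{powerBound} suffices. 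Squaring, integrating over $\Gamma(x)$ with measure $d\mu(y)\,dt/t^{m+1}$, and then integrating over $x \in \R^n \setminus 2Q_i$ produces a convergent geometric series in $k = \log_2(\dist(x,Q_i)/\ell(Q_i))$ which yields the desired tail bound. For $V$ the cone collapses to the vertical line $\{x\} \times (0,\infty)$ and the same recipe applies, a shade more cheaply.

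The main obstacle I anticipate is accommodating the approximate rather than exact vanishing-mean property of the $b_i$: in Tolsa's construction the $b_i$ are schematically of the form $\varphi_i - c_i \mathbf{1}_{Q_i^*}$, and the contribution of the correction pieces must be absorbed using the second of the two controls displayed above, $\sum_i \mu(2Q_i) \lesssim \lambda^{-1}\Norm{f}{L^1(\mu)}$, rather than by kernel cancellation. It is worth emphasising that the missing first-variable continuity of $s_t$ is irrelevant throughout: cancellation is invoked only in the source variable $z$, which is precisely why this weak-type endpoint survives beyond the doubling regime handled in \cite{HTV}.
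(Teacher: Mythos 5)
Your overall strategy --- Tolsa's decomposition, the $L^2(\mu)$ bound for the good part, pointwise tail estimates for the bad part --- is the same as the paper's, but the plan breaks at the decomposition itself and, as a consequence, at the tail estimate. In Tolsa's construction the bad pieces are $b_i = fw_i - \varphi_i$ with \emph{exact} mean zero, and the correction $\varphi_i$ is spread over the smallest $(6,6^{m+1})$-doubling cube $R_i = 6^kQ_i$, $k \ge 1$, not over $Q_i$: you cannot put the mass back onto $Q_i$ without destroying $\|g\|_{L^\infty(\mu)} \lesssim \lambda$, since $\mu(Q_i)$ may be far smaller than $\mu(2Q_i)$. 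So $b_i$ is supported in the possibly huge cube $R_i$, your ``main obstacle'' (approximate mean zero of a $b_i$ supported in $Q_i$) is a misdiagnosis of where the non-doubling difficulty sits, and your tail estimate $\int_{\R^n \setminus 2Q_i} Sb_i\,d\mu \lesssim \|b_i\|_{L^1(\mu)}$, proved by invoking \eqref{eq:hol} at scale $\ell(Q_i)$, is not available: the cancellation of $b_i$ lives at scale $\ell(R_i)$, and \eqref{eq:hol} needs $|y-z| < t/2$, so it can only be exploited when $t \gtrsim \ell(R_i)$, i.e.\ effectively for $x \in (4R_i)^c$.

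The genuinely non-homogeneous part of the argument, which your sketch omits, is the intermediate region $4R_i \setminus 2Q_i$, where $R_i$ may be enormously larger than $Q_i$. There one gives up cancellation entirely, bounds $S(fw_i)(x) \lesssim |x - c_{Q_i}|^{-m} \int_{Q_i} |f|\,d\mu$ using the size estimate alone (the $\varphi_i$-part is absorbed via the $L^2(\mu)$ bound and the doubling of $R_i$, not via $\sum_i \mu(2Q_i)$), and then integrates this over $4R_i \setminus 2Q_i$. Your ``convergent geometric series in $k$'' fails precisely here: with only the growth bound \eqref{powerBound}, each dyadic shell between $6Q_i$ and $R_i$ contributes a constant, so the naive sum gives a factor of order $\log(\ell(R_i)/\ell(Q_i))$, which is unbounded. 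What rescues the estimate is the structural fact that there is no $(6,6^{m+1})$-doubling cube of the form $6^kQ_i$ strictly between $6Q_i$ and $R_i$; this forces geometric decay of $\mu(6^kQ_i)$ and yields $\int_{R_i \setminus 6Q_i} |x - c_{Q_i}|^{-m}\,d\mu(x) \lesssim 1$. A smaller point specific to the conical operator: in the far region $(4R_i)^c$ one of the pieces (the paper's term $III'$, where $\ell(R_i) \le t \le |x - c_{R_i}|/2$ and $y$ is far from $c_{R_i}$) only admits the decay $\ell(R_i)^{\alpha/2} |x - c_{R_i}|^{-m-\alpha/2}$ rather than the full $\alpha$; it is still integrable, but your bookkeeping must accommodate this loss.
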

\begin{proof} The proofs for $S$ and $V$ are quite similar, the one for $V$ being a bit simpler. Consequently, we give the full details only for $S$ and indicate by several in-proof remarks how to make the relevant changes for $V$. Both proofs start by recalling the non-homogeneous Calder\'on-Zygmund decomposition of X. Tolsa.

Let $f \in L^1(\mu)$ and $\lambda > 0$. If $\mu(\R^n) = \infty$ (which we assume for convenience), see \cite{To2}, there exists a family of finitely overlapping
cubes $Q_i$ such that $\int_{Q_i} |f| \,d\mu \gtrsim \lambda \mu(2Q_i)$, $\int_{\eta Q_i} |f|\,d\mu \lesssim \lambda \mu(2\eta Q_i)$, $\eta \ge 2$, and $|f| \le \lambda$ holds $\mu$-a.e. on $\R^n \setminus \bigcup Q_i$.
Let $R_i$ be the smallest $(6, 6^{m+1})$-doubling cube of the form $6^kQ_i$, $k \ge 1$. Set $w_i = 1_{Q_i} / \sum_k 1_{Q_k}$. There exists functions $\varphi_i$ satisfying spt$\,\varphi_i \subset R_i$,
$\int \varphi_i \,d\mu = \int_{Q_i} fw_i\,d\mu$, $\sum |\varphi_i| \lesssim \lambda$ and $\|\varphi_i\|_{L^{\infty}(\mu)} \mu(R_i) \lesssim \int_{Q_i} |f|\,d\mu$.
Finally, we have the decomposition $f = g + b$, $g = f1_{\R^n \setminus \bigcup Q_i} + \sum \varphi_i$, $b = \sum b_i$, $b_i = fw_i - \varphi_i$.

The subadditivity and $L^2(\mu)$ boundedness of $S$ together with the properties of the decomposition show that it is enough to prove that for every $i$ there holds that
\begin{displaymath}
 \int_{(4R_i)^c} Sb_i(x)\,d\mu(x) + \int_{4R_i \setminus 2Q_i} S(fw_i)(x)\,d\mu(x) \lesssim \int_{Q_i} |f|\,d\mu.
\end{displaymath}
The whole thing boils down to pointwise estimates, which are more difficult to obtain for square functions than for Calder\'on--Zygmund operators. Otherwise, the proof structures agree.

We begin by proving the bound 
\begin{displaymath}
 \int_{(4R_i)^c} Sb_i(x)\,d\mu(x) \lesssim \int_{Q_i} |f|\,d\mu.
\end{displaymath}
For this, it is enough to prove that for a fixed $x \in (4R_i)^c$ there holds that
\begin{equation}\label{bestimate}
Sb_i(x) \lesssim \Big(\frac{\ell(R_i)^{\alpha}}{|x-c_{R_i}|^{m+\alpha}} +  \frac{\ell(R_i)^{\alpha/2}}{|x-c_{R_i}|^{m+\alpha/2}}\Big)\|b_i\|_{L^1(\mu)}.
\end{equation}
Down until here, the proofs for $S$ and $V$ are identical, but now we need to concentrate a moment on $S$ alone. 

\subsubsection{Estimating $Sb_{i}(x)$} We bound
\begin{align}
\label{Sspecific} Sb_i(x) &\le \Big( \iint_{\Gamma(x) \cap [2R_i \times \R_+]} |\theta_t b_i(y)|^2 \,\frac{d\mu(y) dt}{t^{m+1}}\Big)^{1/2} \\
&+ \Big( \iint_{\Gamma(x) \cap [(2R_i)^c \times (0, \ell(R_i))]} |\theta_t b_i(y)|^2 \,\frac{d\mu(y) dt}{t^{m+1}}\Big)^{1/2} \notag\\
&+  \Big( \iint_{\Gamma(x) \cap [(2R_i)^c \times [\ell(R_i), \infty)]} |\theta_t b_i(y)|^2 \,\frac{d\mu(y) dt}{t^{m+1}}\Big)^{1/2} = I + II + III. \notag \end{align}

Notice that in the term $I$ there holds that
\begin{displaymath}
t > |x-y| \ge \ell(R_i) \ge 2|z-c_{R_i}|
\end{displaymath}
for every $z \in R_i$. Using $\int b_i\,d\mu = 0$ we have by the H\"older estimate for $s_t$ that
\begin{align*}
|\theta_t b_i(y)| \le \int_{R_i} &|s_t(y,z) - s_t(y,c_{R_i})| |b_i(z)|\,d\mu(z)\\ & \lesssim \frac{\ell(R_i)^{\alpha}}{t^{m+\alpha}} \|b_i\|_{L^1(\mu)} \lesssim \frac{\ell(R_i)^{\alpha}}{|x-c_{R_i}|^{m+\alpha}} \|b_i\|_{L^1(\mu)}.
\end{align*}
The last estimate follows since $t > |x-y| \ge |x-c_{R_i}|/2$. We now have that
\begin{displaymath}
I \lesssim \Big(  \mu(2R_i) \int_{\ell(R_i)}^{\infty}   \frac{dt}{t^{m+1}}\Big)^{1/2}   \frac{\ell(R_i)^{\alpha}}{|x-c_{R_i}|^{m+\alpha}} \|b_i\|_{L^1(\mu)} \lesssim  \frac{\ell(R_i)^{\alpha}}{|x-c_{R_i}|^{m+\alpha}} \|b_i\|_{L^1(\mu)}.
\end{displaymath}

In the term $II$ we note that since $y \in (2R_i)^c$ and $|x-y| < t < \ell(R_i) \le |x-c_{R_i}|/2$ we have for every $z \in R_i$ that
\begin{displaymath}
|y-z| \gtrsim |y-c_{R_i}| \gtrsim |x-c_{R_i}|.
\end{displaymath}
Thus, the size estimate of $s_t$ gives that
\begin{displaymath}
|\theta_t b_i(y)| \lesssim \frac{t^{\alpha}}{|x-c_{R_i}|^{m+\alpha}} \|b_i\|_{L^1(\mu)}.
\end{displaymath}
This yields that
\begin{align*}
II \lesssim \Big( \int_0^{\ell(R_i)}& t^{2\alpha-1} t^{-m}\mu(B(x,t))  \Big)^{1/2}\frac{1}{|x-c_{R_i}|^{m+\alpha}} \|b_i\|_{L^1(\mu)} \\ 
&\lesssim \Big( \int_0^{\ell(R_i)} t^{2\alpha-1} \,dt\Big)^{1/2} \frac{1}{|x-c_{R_i}|^{m+\alpha}} \|b_i\|_{L^1(\mu)} \lesssim \frac{\ell(R_i)^{\alpha}}{|x-c_{R_i}|^{m+\alpha}} \|b_i\|_{L^1(\mu)}.
\end{align*}

In the term $III$ we note that either $t > |x-c_{R_i}|/2$ or $|y-c_{R_i}| > |x-c_{R_i}|/2$. Indeed, otherwise there holds that
\begin{displaymath}
\frac{1}{2}|x-c_{R_i}| \le |x-c_{R_i}| - |y-c_{R_i}| \le |x-y| < t \le \frac{1}{2}|x-c_{R_i}|,
\end{displaymath}
which is a contradiction. Therefore, noting that $\ell(R_i) \le |x-c_{R_i}|/2$, we may bound
\begin{align*}
III \le \Big( \int_{\ell(R_i)}^{|x-c_{R_i}|/2}& \int_{B(x,t) \cap B(c_{R_i}, |x-c_{R_i}|/2)^c} |\theta_t b_i(y)|^2 \,\frac{d\mu(y) dt}{t^{m+1}}\Big)^{1/2} \\
&+ \Big( \int_{|x-c_{R_i}|/2}^{\infty} \int_{B(x,t)} |\theta_t b_i(y)|^2 \,\frac{d\mu(y) dt}{t^{m+1}}\Big)^{1/2} = III' + III''.
\end{align*}

Since $t \ge \ell(R_i)$, we may use the H\"older estimate of $s_t$ to the effect that
\begin{displaymath}
|\theta_t b_i(y)| \lesssim \frac{\ell(R_i)^{\alpha}}{(t+|y-c_{R_i}|)^{m+\alpha}} \|b_i\|_{L^1(\mu)}.
\end{displaymath}
In $III'$ we also bound $\ell(R_i)^{\alpha} \le t^{\alpha/2}\ell(R_i)^{\alpha/2}$ and $|y-c_{R_i}| \gtrsim |x-c_{R_i}|$ so that
\begin{displaymath}
III' \lesssim \Big(\int_{0}^{|x-c_{R_i}|/2} t^{\alpha-1}\,dt \Big)^{1/2} \frac{\ell(R_i)^{\alpha/2}}{|x-c_{R_i}|^{m+\alpha}} \|b_i\|_{L^1(\mu)}
\lesssim \frac{\ell(R_i)^{\alpha/2}}{|x-c_{R_i}|^{m+\alpha/2}} \|b_i\|_{L^1(\mu)}.
\end{displaymath}
For $III''$ we have that
\begin{displaymath}
III'' \lesssim \Big( \int_{|x-c_{R_i}|/2}^{\infty} t^{-2m-2\alpha-1} \,dt\Big)^{1/2} \ell(R_i)^{\alpha} \|b_i\|_{L^1(\mu)} \lesssim \frac{\ell(R_i)^{\alpha}}{|x-c_{R_i}|^{m+\alpha}} \|b_i\|_{L^1(\mu)}.
\end{displaymath}

\subsubsection{Modifications for $V$, part one} In the proof for $V$, the splitting, which starts on line \eqref{Sspecific}, is replaced by the following:
\begin{align*} Vb_{i}(x) & \leq \left( \int_{0}^{\ell(R_{i})} |\theta_{t}b_{i}(x)|^{2} \, \frac{dt}{t} \right)^{1/2}\\
& + \left(\int_{\ell(R_{i})}^{|x - c_{R_{i}}|} |\theta_{t}b_{i}(x)|^{2} \, \frac{dt}{t} \right)^{1/2}\\
& + \left(\int_{|x - c_{R_{i}}|}^{\infty} |\theta_{t}b_{i}(x)|^{2} \, \frac{dt}{t} \right)^{1/2} =: I_{V} + II_{V} + III_{V}.   \end{align*} 
The piece $III_{V}$ here corresponds to the piece $I$ above: using the $y$-continuity of the kernel $s_{t}$, one arrives at
\begin{displaymath} \frac{|\theta_{t}b_{i}(x)|^{2}}{t} \lesssim \frac{\ell(R_{i})^{2\alpha}}{t^{2m + 2\alpha + 1}}\|b_{i}\|_{L^{1}(\mu)}^{2}, \end{displaymath} 
and integrating over $t \geq |x - c_{R_{i}}|$ gives the desired bound.

In $I_{V}$, one notes that $|x - y| \sim |x - c_{R_{i}}|$ for $y \in R_{i}$, whence 
\begin{displaymath} \frac{|\theta_{t}b_{i}(x)|^{2}}{t} \lesssim \frac{t^{2\alpha - 1}}{|x - c_{R_{i}}|^{2m + 2\alpha}}\|b_{i}\|_{L^{1}(\mu)}^{2}. \end{displaymath} 
An integration over $0 \leq t \leq \ell(R_{i})$ finishes the estimate for $I_{V}$. 

In $II_{V}$, we again use the $y$-continuity of $s_{t}$, combined with $\ell(R_{i})^{\alpha} \lesssim \ell(R_{i})^{\alpha/2}t^{\alpha/2}$ and $|x - y| \sim |x - c_{R_{i}}|$. The result is
\begin{displaymath} \frac{|\theta_{t}b_{i}(x)|^{2}}{t} \lesssim \frac{\ell(R_{i})^{\alpha} \cdot t^{\alpha - 1}}{|x - c_{R_{i}}|^{2m + 2\alpha}}\|b_{i}\|_{L^{1}(\mu)}^{2}. \end{displaymath}
Integrating this expression over $0 \leq t \leq |x - c_{R_{i}}|$ gives the right bound and shows that \eqref{bestimate} holds with $S$ replaced by $V$.

\subsubsection{Back to $S$} To finish the proof for $S$, it remains to show that
\begin{displaymath}
\int_{4R_i \setminus 2Q_i} S(fw_i)(x)\,d\mu(x) \lesssim \int_{Q_i} |f|\,d\mu.
\end{displaymath}
We will prove that for a fixed $x \in 4R_i \setminus 2Q_i$ there holds that
\begin{equation}\label{Sestimate}
S(fw_i)(x) \lesssim \frac{1}{|x-c_{Q_i}|^m} \int_{Q_i} |f|\,d\mu.
\end{equation}
This is enough, since
\begin{displaymath}
\int_{4R_i \setminus 2Q_i} \frac{d\mu(x)}{|x-c_{Q_i}|^m} \le \Big(\int_{4R_i \setminus R_i} + \int_{R_i \setminus 6Q_i} + \int_{6Q_i \setminus Q_i}\Big) \frac{d\mu(x)}{|x-c_{Q_i}|^m} \lesssim 1,
\end{displaymath}
where the first and last terms are trivial to estimate (recall $c_{Q_i} = c_{R_i}$). The estimate for the middle term follows from a standard calculation recalling that
there are no $(6, 6^{m+1})$-doubling cubes of the form $6^kQ_i$ strictly between  $6Q_i$ and $R_i$.

So fix $x \in 4R_i \setminus 2Q_i$. We estimate
\begin{align*}
S(fw_i)(x) &\le \Big( \iint_{\Gamma(x) \cap [1.5Q_i \times \R_+]} |\theta_t (fw_i)(y)|^2 \,\frac{d\mu(y) dt}{t^{m+1}}\Big)^{1/2} \\
&+  \Big( \iint_{\Gamma(x) \cap [(1.5Q_i)^c \times (0, |x-c_{Q_i}|/2)]} |\theta_t (fw_i)(y)|^2 \,\frac{d\mu(y) dt}{t^{m+1}}\Big)^{1/2} \\
&+ \Big( \iint_{\Gamma(x) \cap [(1.5Q_i)^c \times [|x-c_{Q_i}|/2, \infty)]} |\theta_t (fw_i)(y)|^2 \,\frac{d\mu(y) dt}{t^{m+1}}\Big)^{1/2} = A + B + C.
\end{align*}
Let $z \in Q_i$. We use the size estimate.
In $A$ we estimate
\begin{displaymath}
|s_t(y,z)| \lesssim \frac{1}{t^m} \lesssim \frac{1}{|x-c_{Q_i}|^m}.
\end{displaymath}
In $B$ we have
\begin{displaymath}
|s_t(y,z)| \lesssim \frac{t^{\alpha}}{|y-z|^{m+\alpha}} \lesssim \frac{t^{\alpha}}{|y-c_{Q_i}|^{m+\alpha}} \lesssim \frac{t^{\alpha}}{|x-c_{Q_i}|^{m+\alpha}}.
\end{displaymath}
Lastly, in $C$ we simply use
\begin{displaymath}
|s_t(y,z)| \lesssim \frac{1}{t^m}.
\end{displaymath}

We now have:
\begin{align*}
A \lesssim \Big(\mu(1.5Q_i) \int_{\ell(Q_i)/4}^{\infty} \frac{dt}{t^{m+1}} \Big)^{1/2}\frac{1}{|x-c_{Q_i}|^m} \int_{Q_i} |f|\,d\mu \lesssim \frac{1}{|x-c_{Q_i}|^m} \int_{Q_i} |f|\,d\mu,
\end{align*}
\begin{align*}
B \lesssim \Big( \int_0^{|x-c_{Q_i}|/2}  t^{2\alpha-1}\,dt \Big)^{1/2} \frac{1}{|x-c_{Q_i}|^{m+\alpha}} \int_{Q_i} |f|\,d\mu \lesssim  \frac{1}{|x-c_{Q_i}|^m} \int_{Q_i} |f|\,d\mu
\end{align*}
and
\begin{align*}
C \lesssim \Big( \int_{|x-c_{Q_i}|/2}^{\infty} t^{-2m-1}\,dt \Big)^{1/2} \int_{Q_i} |f|\,d\mu \lesssim \frac{1}{|x-c_{Q_i}|^m} \int_{Q_i} |f|\,d\mu.
\end{align*}
This completes the proof for $S$.
\subsubsection{Modifications for $V$, part two} To complete the proof for $V$, it suffices to prove \eqref{Sestimate} with $S$ replaced by $V$. The splitting is very natural:
\begin{displaymath} V(fw_{i})(x) \leq \left( \int_{0}^{|x - c_{Q_{i}}|} |\theta_{t}(fw_{i})(x)|^{2} \, \frac{dt}{t} \right)^{1/2} + \left( \int_{|x - c_{Q_{i}}|}^{\infty} |\theta_{t}(fw_{i})(x)|^{2} \, \frac{dt}{t} \right)^{1/2}. \end{displaymath}
These terms are estimated exactly like $B$ and $C$ above, and there is no point in repeating the details.

The $L^{1}(\mu) \to L^{1,\infty}(\mu)$ bounds are now established for $S$ and $V$ alike.
\end{proof}

\begin{cor}
If $S\colon L^2(\mu) \to L^2(\mu)$ boundedly, then $S\colon L^p(\mu) \to L^p(\mu)$ boundedly for $1 < p < 2$.
\end{cor}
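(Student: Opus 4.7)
The proof is an immediate application of Marcinkiewicz interpolation, so there is no real obstacle. The plan is as follows.

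First I would observe that $S$ is a sublinear operator: for $f,g \in L^2(\mu)$ and scalars $\lambda$, the triangle inequality in $L^2(\Gamma(x), d\mu\,dt/t^{m+1})$ gives $S(f+g)(x) \leq Sf(x) + Sg(x)$ and $S(\lambda f)(x) = |\lambda|\, Sf(x)$ pointwise. Next, I would combine the two endpoint bounds now at our disposal: the hypothesis that $S\colon L^2(\mu) \to L^2(\mu)$ is bounded (strong-type $(2,2)$), and Theorem \ref{nonhomweakbound}, which provides the weak-type $(1,1)$ bound $S\colon L^1(\mu) \to L^{1,\infty}(\mu)$.

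With these two ingredients in place, the Marcinkiewicz interpolation theorem (which applies to sublinear operators and does not require the underlying measure $\mu$ to be doubling or otherwise well-behaved, since it is a purely measure-theoretic statement about distribution functions) yields the strong-type $(p,p)$ bound
\begin{displaymath}
\|Sf\|_{L^p(\mu)} \lesssim_{p} \|f\|_{L^p(\mu)}, \qquad f \in L^p(\mu),
\end{displaymath}
for every exponent $p$ strictly between $1$ and $2$. This completes the proof. The range $p > 2$ will of course be handled separately via the $L^\infty(\mu) \to \textup{RBMO}(\mu)$ endpoint and the Lin--Yang interpolation theorem, as outlined in the introduction.
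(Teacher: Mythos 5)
Your proposal is correct and coincides with the paper's (implicit) argument: the corollary is stated right after Theorem \ref{nonhomweakbound} precisely because it follows by Marcinkiewicz interpolation for sublinear operators between the weak-type $(1,1)$ bound just proved and the assumed strong $(2,2)$ bound. Nothing further is needed.
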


\subsection{$L^{\infty}(\mu) \to \textup{RBMO}(\mu)$ and $L^p(\mu) \to L^p(\mu)$ for $2 < p < \infty$}
In this subsection we will prove the next theorem.

\begin{thm}\label{thm:rbmo} Assume that the square function $S$ is a bounded mapping $L^{2}(\mu) \to L^{2}(\mu)$. Then $S$ is a bounded mapping $L^{\infty}(\mu) \to \textup{RBMO}(\mu)$.
\end{thm}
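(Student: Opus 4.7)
The plan is to verify Tolsa's characterization of $\text{RBMO}(\mu)$: produce, for each cube $Q$, a constant $c_Q$ such that
\[
\frac{1}{\mu(\eta Q)}\int_Q |Sf - c_Q|\,d\mu \lesssim \|f\|_{L^{\infty}(\mu)} \quad\text{and}\quad |c_Q - c_R| \lesssim K_{Q,R}\|f\|_{L^{\infty}(\mu)} \text{ for } Q\subset R,
\]
with $K_{Q,R}$ the Tolsa distance coefficient and $\eta > 1$ fixed. The natural choice is $c_Q := m_Q(Sg_Q)$ with $g_Q := f \mathbf{1}_{(\tfrac{6}{5}Q)^c}$. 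By a standard approximation argument I will assume $f$ is bounded with compact support, so that all square functions encountered are finite $\mu$-a.e.

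For the local oscillation, sublinearity gives $|Sf - c_Q| \le S(f\mathbf{1}_{\tfrac{6}{5}Q}) + |Sg_Q - m_Q(Sg_Q)|$. The first piece is handled by Cauchy--Schwarz and the assumed $L^2(\mu)$ bound of $S$, together with the power bound \eqref{powerBound}:
\[
\int_Q S(f\mathbf{1}_{\tfrac{6}{5}Q})\,d\mu \lesssim \mu(Q)^{1/2}\mu(\tfrac{6}{5}Q)^{1/2}\|f\|_{L^{\infty}(\mu)} \lesssim \mu(\eta Q) \|f\|_{L^{\infty}(\mu)}
\]
for $\eta \ge 6/5$. For the oscillation piece I control the pointwise oscillation of $Sg_Q$ on $Q$ by the cone-symmetric-difference identity
\[
|Sg_Q(x) - Sg_Q(y)|^2 \le |Sg_Q(x)^2 - Sg_Q(y)^2| \le \iint_{\Gamma(x) \triangle \Gamma(y)} |\theta_t g_Q(z)|^2\,\frac{d\mu(z)dt}{t^{m+1}},
\]
and then estimate the symmetric-difference integral by splitting on the height $t$: for $t \le \ell(Q)$ the size estimate \eqref{eq:size} yields $|\theta_t g_Q(z)| \lesssim (t/\ell(Q))^{\alpha}\|f\|_{L^{\infty}(\mu)}$ (since $g_Q$ vanishes on $\tfrac{6}{5}Q$ while $z$ lies near $Q$), giving a readily integrable contribution; for $t > \ell(Q)$ the symmetric difference is concentrated in a spherical shell of width $\sim \ell(Q)$, and is dealt with by a dyadic decomposition in $t$ combined with the kernel estimates \eqref{eq:size} and \eqref{eq:hol}.

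For the comparison step, given $Q \subset R$ I decompose
\[
c_Q - c_R = m_Q(Sg_Q - Sg_R) + \bigl(m_Q(Sg_R) - m_R(Sg_R)\bigr).
\]
The second bracket is controlled by the oscillation argument above, applied with $R$ in place of $Q$. For the first, sublinearity gives $|Sg_Q - Sg_R| \le S(g_Q - g_R) = S(f\mathbf{1}_{\tfrac{6}{5}R \setminus \tfrac{6}{5}Q})$, and one estimates $S(f\mathbf{1}_{\tfrac{6}{5}R \setminus \tfrac{6}{5}Q})(x)$ pointwise for $x \in Q$ using exactly the kind of splitting over heights $t$ that appeared in the proof of Theorem \ref{nonhomweakbound}. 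Decomposing the annulus $\tfrac{6}{5}R \setminus \tfrac{6}{5}Q$ into dyadic shells $2^{k+1}Q \setminus 2^k Q$ between $Q$ and $R$ then produces exactly the Tolsa coefficient $K_{Q,R}$ on the right-hand side.

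The main obstacle is the large-$t$ regime of the oscillation estimate. Because no first-variable regularity of $s_t$ is available, the natural comparison $\theta_t g_Q(x) - \theta_t g_Q(y)$ is ruled out and we are forced onto the symmetric-difference route. Under the power bound \eqref{powerBound} alone, the shell of width $\ell(Q)$ at height $t$ admits only the crude measure estimate $\lesssim t^m$, so one cannot afford to apply the uniform pointwise bound $|\theta_t g_Q(z)| \lesssim \|f\|_{L^{\infty}(\mu)}$; the necessary extra decay must be extracted from the structural fact that the support of $g_Q$ lies outside $\tfrac{6}{5}Q$, combined with the H\"older bound \eqref{eq:hol} in the second variable, in close parallel with the annular estimates already performed in the proof of Theorem \ref{nonhomweakbound}. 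This is where the technical substance of the proof lies.
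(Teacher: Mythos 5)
Your architecture is the same as the paper's: regularize by cutting $f$ off away from the cube, handle the local piece via Cauchy--Schwarz and the $L^{2}(\mu)$ bound, control the oscillation of $S$ applied to the far part through $|Sg(x)-Sg(y)|^{2}\le|Sg(x)^{2}-Sg(y)^{2}|\le\iint_{\Gamma(x)\Delta\Gamma(y)}|\theta_{t}g|^{2}\,d\mu\,dt/t^{m+1}$, and get the second RBMO condition by telescoping over a chain of scales (the paper uses balls $B=B_{0}\subset\cdots\subset B_{K}=R$ and bounds each increment by $\int_{CB_{k+1}\setminus CB_{k}}d\mu(y)/|y-a|^{m}$, which is your $K_{Q,R}$). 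The genuine gap is in the step you yourself flag as the ``technical substance'': the large-$t$ part of the symmetric-difference integral. The mechanism you propose there cannot work. For $t\gg\ell(Q)$ the support restriction of $g_{Q}$ is invisible at scale $t$: with $f\equiv1$ and $s_{t}$ a normalized bump one has $\theta_{t}g_{Q}(z)\sim1$ uniformly in $t\ge\ell(Q)$, so no pointwise decay in $t$ exists to be extracted; and \eqref{eq:hol} has nothing to bite on, since $g_{Q}$ carries no cancellation and there is no pair of nearby second arguments being compared. A dyadic decomposition in $t$ with fixed-height measure bounds does not save it either: the union over $t\sim2^{k}\ell(Q)$ of the symmetric differences has $\mu$-measure $\lesssim(2^{k}\ell(Q))^{m}$, which contributes $\sim1$ per dyadic block and a divergent sum.

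The correct (and simpler) mechanism, used in the paper's lemma, is purely geometric: keep the crude bound $|\theta_{t}g(z)|\lesssim\|f\|_{L^{\infty}(\mu)}$ and integrate in $t$ \emph{first}. For fixed $z$, the set of heights $t$ with $z\in B(x,t)\Delta B(a,t)$ is an interval of length at most $|x-a|\lesssim r$ located at $t\approx|z-a|$, so
\[
\iint_{t>2r}1_{B(x,t)\Delta B(a,t)}(z)\,d\mu(z)\,\frac{dt}{t^{m+1}}\ \lesssim\ r\int_{\R^{n}\setminus B(a,r)}\frac{d\mu(z)}{|z-a|^{m+1}}\ \lesssim\ 1
\]
by the power bound \eqref{powerBound}. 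Without this Fubini step your large-$t$ estimate does not close, and since the same oscillation lemma also underlies your comparison of $c_{Q}$ and $c_{R}$, the gap affects both RBMO conditions. A secondary, fixable point: with the cut-off at $\tfrac{6}{5}Q$ the small-$t$ decay $|\theta_{t}g_{Q}(z)|\lesssim(t/\ell(Q))^{\alpha}$ fails for $z$ inside the cone over $Q$ but outside $\tfrac{6}{5}Q$; cut instead at a large dilate $CQ$ (as the paper does with $B(a,Cr)$) --- the RBMO characterization is insensitive to this choice.
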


\begin{cor} If $S \colon L^{2}(\mu) \to L^{2}(\mu)$ boundedly, then $S \colon L^{p}(\mu) \to L^{p}(\mu)$ boundedly for all $2 < p < \infty$. \end{cor}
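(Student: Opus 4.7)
The plan is to obtain this corollary by interpolation, combining the hypothesis that $S$ is bounded on $L^{2}(\mu)$ with the conclusion of Theorem \ref{thm:rbmo}, which supplies the endpoint bound $S \colon L^{\infty}(\mu) \to \operatorname{RBMO}(\mu)$. The right tool is the interpolation theorem of H. Lin and D. Yang \cite{LY} for sublinear operators in the non-homogeneous setting, which upgrades $L^{p_{0}}(\mu)$-boundedness plus an $L^{\infty}(\mu) \to \operatorname{RBMO}(\mu)$ endpoint into $L^{p}(\mu)$-boundedness on the entire range $p_{0} < p < \infty$. With $p_{0} = 2$ this is precisely the statement we want.

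Before invoking \cite{LY}, the one small check is that $S$ qualifies as a sublinear operator in the sense required there. This is immediate: writing
\begin{displaymath}
Sf(x) = \Bigl\| \theta_{t} f(y) \Bigr\|_{L^{2}(\Gamma(x), d\mu(y)\,dt/t^{m+1})},
\end{displaymath}
Minkowski's inequality in the Hilbert space $L^{2}(\Gamma(x), d\mu(y)\,dt/t^{m+1})$ combined with the linearity of $\theta_{t}$ gives $S(f+g)(x) \le Sf(x) + Sg(x)$ and $S(\lambda f)(x) = |\lambda| Sf(x)$ pointwise.

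The only subtle point — and what I expect to be the main thing to verify — is that the precise assumptions of the Lin-Yang interpolation theorem are met by our measure $\mu$. Their framework is exactly the non-homogeneous one used throughout this paper (upper doubling or power-bounded measures in the sense of \eqref{powerBound}), so no adaptation is needed. Once this is noted, the corollary follows with no further computation; there is no analogue of the $L^{1}$-endpoint argument to redo, since the $L^{2}$ hypothesis already handles the lower endpoint of the interpolation range.
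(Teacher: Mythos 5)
Your proof is correct and is exactly the paper's argument: the corollary is obtained by combining the $L^{2}(\mu)$ hypothesis with the $L^{\infty}(\mu) \to \operatorname{RBMO}(\mu)$ bound of Theorem \ref{thm:rbmo} via the Lin--Yang interpolation theorem for sublinear operators \cite{LY}. Your extra remarks on sublinearity of $S$ and on the applicability of the non-homogeneous framework of \cite{LY} are sensible checks that the paper leaves implicit.
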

\begin{proof} This follows by combining Theorem \ref{thm:rbmo} with the interpolation result for sublinear operators in \cite{LY}. \end{proof}

To prove Theorem \ref{thm:rbmo}, we start with a technical lemma.

\begin{lem} Let $a \in \R^{n}$, $r > 0$. Let $f \in L^{\infty}(\mu)$ with $\|f\|_{L^{\infty}(\mu)} \leq 1$. Then
\begin{displaymath} |S(f1_{\R^{n} \setminus B(a,Cr)})(x) - S(f1_{\R^{n} \setminus B(a,Cr)})(a)| \lesssim 1, \qquad x \in B(a,r), \end{displaymath}
for all large enough constants $C \geq 1$.
\end{lem}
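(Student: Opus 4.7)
The plan is to estimate $|Sg(x) - Sg(a)|$ where $g := f \mathbf{1}_{\R^{n} \setminus B(a, Cr)}$. The elementary inequality $(A - B)^{2} \le |A^{2} - B^{2}|$, valid for $A, B \ge 0$, together with the definition of $S$, yields
\[
|Sg(x) - Sg(a)|^{2} \le \iint_{\Gamma(x) \triangle \Gamma(a)} |\theta_{t}g(z)|^{2} \, \frac{d\mu(z) \, dt}{t^{m+1}},
\]
so it suffices to bound the right-hand side by $O(1)$. The key geometric input is that $(z, t) \in \Gamma(x) \triangle \Gamma(a)$ forces $\bigl| |z - x| - |z - a| \bigr| \le |x - a| \le r$, so for fixed $z$ the set of admissible $t$ is an interval of length at most $r$ contained in $[|z - a| - r, |z - a| + r]$; moreover, \eqref{powerBound} gives $\mu(B(x, t) \triangle B(a, t)) \lesssim t^{m}$ for every $t > 0$.

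Two pointwise estimates for $|\theta_{t} g(z)|$ drive the argument. The first is the uniform bound $|\theta_{t} g(z)| \lesssim \|f\|_{L^{\infty}(\mu)} \le 1$, obtained from \eqref{eq:size} and $|g| \le 1$ by a standard dyadic summation in $|z - w|$. The second, crucial, estimate applies when $|z - a| \le Cr/2$: the support gap gives $|z - w| \ge Cr/2$ for $w \in \supp g$, and dyadic summation using \eqref{eq:size} produces
\[
|\theta_{t} g(z)| \lesssim t^{\alpha} \int_{|w - a| \ge Cr} \frac{d\mu(w)}{|z - w|^{m + \alpha}} \lesssim \Big( \frac{t}{Cr} \Big)^{\alpha}.
\]
The $t^{\alpha}$ decay in this second bound is what ultimately compensates for the singular factor $t^{-(m+1)}$ in the cone measure.

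Splitting the integration according to $|z - a|$ completes the argument. In the range $|z - a| \le Cr/2$, the symmetric-difference constraint $|z - a| \ge t - r$ forces $t \le Cr$ once $C$ is large, and combining the decay estimate with $\mu(B(x,t) \triangle B(a,t)) \lesssim t^{m}$ gives
\[
\iint_{|z - a| \le Cr/2} |\theta_{t} g|^{2} \, \frac{d\mu \, dt}{t^{m+1}} \lesssim (Cr)^{-2\alpha} \int_{0}^{Cr} t^{2\alpha - 1} \, dt \lesssim 1.
\]
In the complementary range $|z - a| > Cr/2$, one uses the uniform bound and integrates $t$ first over the length-$\le 2r$ interval around $|z-a|$ to produce $\lesssim r / |z-a|^{m+1}$; a dyadic sum in $|z - a|$ via \eqref{powerBound} then yields $\int_{|z - a| > Cr/2} r \, |z - a|^{-m-1} \, d\mu(z) \lesssim 1/C \lesssim 1$. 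The main obstacle is securing the decay on $|\theta_{t} g|$ near $a$: since the kernel $s_{t}$ is not assumed to have any cancellation or first-variable regularity, the $t^{\alpha}$ numerator in \eqref{eq:size} combined with the support gap $\ge Cr$ is the only mechanism available to beat the $t^{-(m+1)}$ singularity as $t \to 0^{+}$; once this is in hand, the two case integrals are routine.
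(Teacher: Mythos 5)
Your argument is correct and is essentially the paper's proof: both reduce via $(A-B)^2\le|A^2-B^2|$ to the symmetric-difference integral, use the decay $|\theta_t g(z)|\lesssim (t/(Cr))^{\alpha}$ coming from the support gap near $a$, and use the uniform bound $|\theta_t g|\lesssim 1$ together with the fact that the symmetric-difference region is thin (the $t$-interval has length $\lesssim r$, resp. $\mu(B(x,t)\Delta B(a,t))\lesssim t^m$) away from $a$. The only difference is bookkeeping: you split the region $\Gamma(x)\Delta\Gamma(a)$ according to $|z-a|\lessgtr Cr/2$, while the paper splits at scale $t\sim r$ and estimates the full cone integrals separately for $t\le 2r$; the estimates involved are the same.
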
 

\begin{proof} Writing $S := S(f1_{\R^{n} \setminus B(a,Cr)})$, and using $|S(x) - S(a)| \leq |S^{2}(x) - S^{2}(a)|^{1/2}$, it suffices to prove that $|S^{2}(x) - S^{2}(a)| \lesssim 1$. The first thing to check is the following: for every $x \in B(a,r)$, one has
\begin{equation}\label{form1} \int_{0}^{2r} \int_{B(x,t)} |\theta_{t}(f1_{\R^{n} \setminus B(a,Cr)})(y)|^{2} \, d\mu(y) \, \frac{dt}{t^{m + 1}} \lesssim 1. \end{equation} 
Fix the parameters $x \in B(a,r)$, $t \leq 2r$, $y \in B(x,t)$, and consider
\begin{displaymath} |\theta_{t}(f1_{\R^{n} \setminus B(a,Cr)})(y)| \lesssim \int_{\R^{n} \setminus B(a,Cr)} \frac{t^{\alpha}}{(t + |y - z|)^{m + \alpha}} \, d\mu(z). \end{displaymath}
For $z \in \R^{n} \setminus B(a,Cr)$ and $y \in B(x,t) \subset B(a,4r)$ we have $|y - z| \sim |a - z|$, if $C$ is large enough. Thus, we have that
\begin{displaymath}
 |\theta_{t}(f1_{\R^{n} \setminus B(a,Cr)})(y)| \lesssim t^{\alpha} \int_{\R^{n} \setminus B(a,Cr)} \frac{d\mu(z)}{|a - z|^{m + \alpha}} \lesssim \frac{t^{\alpha}}{r^{\alpha}}. 
\end{displaymath} 
This estimate is uniform in $y \in B(x,t)$, so the left hand side of \eqref{form1} is bounded by a constant multiple of
\begin{displaymath} \int_{0}^{2r} \int_{B(x,t)} \frac{t^{2\alpha}}{r^{2\alpha}} \, d\mu(y) \, \frac{dt}{t^{m + 1}} \lesssim \frac{1}{r^{2\alpha}} \int_{0}^{2r} t^{2\alpha - 1} \, dt \sim 1. \end{displaymath}

Next, if $g:=f1_{\R^{n} \setminus B(a,Cr)}$ using \eqref{form1} write
\begin{align*} & |S^{2}(x) - S^{2}(a)|  \\
&\lesssim \left|\int_{2r}^{\infty} \int_{B(x,t)}  |\theta_{t}(g)(y)|^{2} \, d\mu(y) \, \frac{dt}{t^{m + 1}} - \int_{2r}^{\infty} \int_{B(a,t)}  |\theta_{t}(g)(y)|^{2} \, d\mu(y) \, \frac{dt}{t^{m + 1}} \right| + 1 \\
 &\leq \int_{2r}^{\infty} \int_{B(x,t) \Delta B(a,t)}  |\theta_{t}(g)(y)|^{2} \, d\mu(y) \, \frac{dt}{t^{m + 1}} + 1\\
 &\lesssim \int_{2r}^{\infty} \int_{B(x,t) \Delta B(a,t)} \, d\mu(y) \, \frac{dt}{t^{m + 1}} + 1. \end{align*}
The symmetric difference $B(x,t) \Delta B(a,t)$ is the union of $B(a,t) \setminus B(x,t)$ and $B(x,t) \setminus B(a,t)$, and we deal with the corresponding integrals separately. For instance, the part with $B(a,t) \setminus B(x,t)$ is estimated as follows:
\begin{align*}
\int_{2r}^{\infty} \int_{B(a,t) \setminus B(x,t)} \, d\mu(y) \, \frac{dt}{t^{m + 1}}  \leq \int_{\R^{n} \setminus B(a,r)} & \int_{|y - x|-r}^{|y - x|} \frac{dt}{|y-a|^{m + 1}} \, d\mu(y)\\
&\sim r\int_{\R^{n} \setminus B(a,r)} \frac{d\mu(y)}{|y-a|^{m + 1}} \sim 1.
\end{align*} 
The integral over the domain $B(x,t) \setminus B(a,t)$ is treated similarly, and the proof of the lemma is complete. \end{proof}

\begin{proof}[Proof of Theorem \ref{thm:rbmo}]
Fix $f \in L^{\infty}(\mu)$ with $\|f\|_{L^{\infty}} \leq 1$. With the previous lemma in use, the first condition of $Sf \in \textup{RBMO}(\mu)$ is straightforward to verify. Given any ball $B = B(a,r)$ set
\begin{displaymath} (Sf)_{B} := S(f1_{\R^{n} \setminus B(a,Cr)})(a), \end{displaymath}
where $C \geq 1$ is the constant from the lemma. Then, using the lemma, the sublinearity of $S$, and the $L^{2}(\mu) \to L^{2}(\mu)$ bound for $S$, one has the following estimate:
\begin{align*} \int_{B} |Sf(x) - (Sf)_{B}| \, d\mu(x) & \lesssim \int_{B} |Sf(x) - S(f1_{\R^{n} \setminus B(a,Cr)})(x)| \, d\mu(x) + \mu(B)\\
& \leq \int_{B} S(f1_{B(a,Cr)})(x) \, d\mu(x) + \mu(B)\\
& \leq \mu(B)^{1/2} \left(\int S(f1_{B(a,Cr)})(x)^{2} \, d\mu(x) \right)^{1/2} + \mu(B)\\
& \lesssim \mu(B)^{1/2}\|f1_{B(a,Cr)}\|_{L^{2}(\mu)} + \mu(B) \lesssim \mu(CB). \end{align*} 
This is precisely the first of the two conditions required for $Sf \in RBMO(\mu)$.

Verifying the second condition of $Sf \in \textup{RBMO}(\mu)$ is also quite easy. Let us denote by $r(B)$ the radius of any ball $B \subset \R^n$. Fix two balls $B \subset R \subset \R^{n}$ and let $a$ be the center of $B$. Recall that one is supposed to verify the estimate
\begin{equation}\label{rbmo2} |(Sf)_{R} - (Sf)_{B}| \lesssim 1 + \int_{CR \setminus B} \frac{d\mu(y)}{|y - a|^{m}}. \end{equation}
First of all, the previous lemma shows that
\begin{displaymath} |(Sf)_{R} - (Sf)_{B}| \lesssim 1 + |S(f1_{\R^{n} \setminus CR})(a) - S(f1_{\R^{n} \setminus CB})(a)|. \end{displaymath}
Next, choose an increasing sequence of balls $B_{0} \subset B_{1} \subset \ldots \subset B_{K}$ with the following properties:
\begin{itemize}
\item[(i)] $B_{0} = B$ and $B_{K} = R$.
\item[(ii)] $r(B_{k}) \sim r(B_{k + 1})$, and $|z - a| \sim r(B_{k})$ for $z \in CB_{k + 1} \setminus CB_{k}$. 
\item[(iii)] If $y \in 3B_{k}$ and $z \in \R^{n} \setminus CB_{k}$, then 
\begin{displaymath} |y - z| \sim |a - z|. \end{displaymath} 
\end{itemize}

The existence of such a sequence is a simple geometric fact. 
Then, using the sublinearity of $S$, one has
\begin{align*}  |S(f1_{\R^{n} \setminus CR})(a) - S(f1_{\R^{n} \setminus CB})(a)| & \leq \sum_{k = 0}^{K - 1} |S(f1_{\R^{n} \setminus CB_{k + 1}})(a) - S(f1_{\R^{n} \setminus CB_{k}})(a)|\\
& \leq \sum_{k = 0}^{K - 1} S(f1_{CB_{k + 1} \setminus CB_{k}})(a). \end{align*} 
So, in order to prove \eqref{rbmo2}, it suffices to obtain the following estimate for every individual term in the sum:
\begin{equation}\label{form2} S(f1_{CB_{k + 1} \setminus CB_{k}})(a) \lesssim \int_{CB_{k + 1} \setminus CB_{k}} \frac{d\mu(y)}{|y - a|^{m}}. \end{equation}
Write
\begin{displaymath} S^{2}(f1_{CB_{k + 1} \setminus CB_{k}})(a) = \int_{0}^{r(B_{k})} \cdots \frac{dt}{t^{m + 1}} + \int_{r(B_{k})}^{\infty} \cdots \frac{dt}{t^{m + 1}} =: I_{1} + I_{2}. \end{displaymath}
The pieces $I_{1}$ and $I_{2}$ both satisfy \eqref{form2}, as the following reasoning shows. In bounding $I_{1}$, the crucial fact is that if $t \leq r(B_{k})$, $y \in B(a,t) \subset 3B_{k}$ and $z \in CB_{k + 1} \setminus CB_{k}$, then $|y - z| \sim |a - z| \sim r(B_{k})$, combining the properties (ii) and (iii). This yields
\begin{align*} I_{1} & \lesssim \int_{0}^{r(B_{k})} \int_{B(a,t)} \left| \int_{CB_{k + 1} \setminus CB_{k}} \frac{t^{\alpha}}{|y - z|^{m + \alpha}} \, d\mu(z) \right|^{2} \, d\mu(y) \, \frac{dt}{t^{m + 1}}\\
& \lesssim \frac{[\mu(CB_{k + 1} \setminus CB_{k})]^{2}}{(r(B_{k}))^{2m}} \int_{0}^{r(B_{k})} \frac{t^{2\alpha - 1}}{(r(B_{k}))^{2\alpha}} \, dt \sim \left( \int_{CB_{k + 1} \setminus CB_{k}} \frac{d\mu(y)}{|y - a|^{m}} \right)^{2}.\end{align*} 
The estimate for $I_{2}$ requires even less care:
\begin{align*} I_{2} & \lesssim \int_{r(B_{k})}^{\infty} \int_{B(a,t)} \left| \int_{CB_{k + 1} \setminus CB_{k}} \frac{d\mu(z)}{t^{m}} \right|^{2} \, d\mu(y) \, \frac{dt}{t^{m + 1}}\\
& \lesssim \frac{[\mu(CB_{k + 1} \setminus CB_{k})]^{2}}{(r(B_{k}))^{2m}} \int_{r(B_{k})}^{\infty} \frac{(r(B_{k}))^{2m}}{t^{2m + 1}} \, dt \sim \left( \int_{CB_{k + 1} \setminus CB_{k}} \frac{d\mu(y)}{|y - a|^{m}} \right)^{2}. \end{align*} 
This completes the proof of \eqref{form2} -- and Theorem \ref{thm:rbmo}. \end{proof}

\subsection{$V$ can be bounded on $L^2(\mu)$ but unbounded on $L^p(\mu)$ for every $p>2$}

The purpose of this subsection is to demonstrate, by example, that the assumption of $V$ mapping $L^{2}(\mu) \to L^{2}(\mu)$ boundedly \textbf{does not} imply that $V$ maps $L^{p}(\mu)$ to $L^{p}(\mu)$ for any $p > 2$. In fact, our example shows that this implication fails even for $\mu = dx$, the Lebesgue measure on $\R$.

The heart of the example is the following lemma.
\begin{lem} There exists a measurable function $f \colon [0,1] \to (0,1]$ such that
\begin{equation}\label{L1} \int_{I} \ln^{+} \left( \frac{\ell(I)}{f(t)} \right) \, dt \lesssim \ell(I) \end{equation}
for all intervals $I \subset [0,1]$,
but
\begin{equation}\label{noLp} \int_{0}^{1} \left( \ln \frac{1}{f(t)} \right)^{p} \, dt = \infty, \qquad p > 1. \end{equation} 
\end{lem}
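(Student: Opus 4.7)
The plan is to pass to $g := \log_{2}(1/f) \geq 0$, which reduces the lemma to constructing a measurable $g \colon [0,1] \to [0,\infty)$ such that $\int_{I}(g(t) - \log_{2}(1/\ell(I)))^{+}\,dt \lesssim \ell(I)$ for every interval $I \subset [0,1]$, together with $\int_{0}^{1} g^{p}\,dt = \infty$ for all $p > 1$. A layer-cake computation identifies the target distribution $\lambda(s) := |\{g > s\}| \sim 1/(s \log^{2} s)$: this is the borderline case ensuring $g \in L^{1}$ but $g \notin L^{p}$ for any $p > 1$.

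The essential obstacle is that a non-increasing rearrangement with this distribution fails the BMO-type bound on $I = [0, \ell]$ by a factor $\log\log(1/\ell)$, so the high level sets of $g$ must be spatially spread out. I will realize $g$ as $g(t) := \sup\{n \geq n_{0} : t \in B_{n}\}$, where for sufficiently large $n_{0}$ and rapidly growing $N_{n}$ (e.g.\ $N_{n} := 2^{n^{2}}$), the set
\[ B_{n} := \bigcup_{j=0}^{N_{n}-1}\bigl[j/N_{n},\, j/N_{n} + w_{n}\bigr], \qquad w_{n} := \frac{1}{N_{n}\, n^{2} \log^{2} n}, \]
is an equispaced union of $N_{n}$ congruent clumps in $[0,1]$ with $|B_{n}| = 1/(n^{2}\log^{2} n)$. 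Summability of $|B_{n}|$ and Borel--Cantelli give $g < \infty$ a.e. The rapid growth of $N_{n}$ is chosen so that each $B_{m}$-clump contains $\approx w_{m} N_{m'} \geq 1$ full $B_{m'}$-clumps for every $m' > m$; this yields the combinatorial near-independence $|B_{m} \cap B_{m'}| \leq 2|B_{m}||B_{m'}|$, and in turn
\[ |\{g = m\}| = \Big|B_{m} \setminus \bigcup_{m' > m} B_{m'}\Big| \geq |B_{m}| - 2|B_{m}|\sum_{m' > m}|B_{m'}| \geq \tfrac{1}{2}|B_{m}| \]
for all sufficiently large $m$, whence $\int g^{p} \gtrsim \sum_{m} m^{p}|B_{m}| = \sum_{m} m^{p-2}/\log^{2} m = \infty$ for every $p > 1$.

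For the BMO-type bound, fix $I$ with $T := \log_{2}(1/\ell(I))$ and estimate via layer cake
\[ \int_{I}(g - T)^{+}\,dt \leq \sum_{n > T}|I \cap \{g \geq n\}| \leq \sum_{m > T}(m - T + 1)|I \cap B_{m}|. \]
For every $m > T$ one has $N_{m} = 2^{m^{2}} \gg 2^{T} = 1/\ell(I)$, so $I$ meets at most $\lceil \ell(I) N_{m}\rceil + 1 \lesssim \ell(I) N_{m}$ of the $B_{m}$-clumps, hence $|I \cap B_{m}| \lesssim \ell(I) N_{m} w_{m} = \ell(I)|B_{m}|$. Summing yields $\int_{I}(g - T)^{+}\,dt \lesssim \ell(I) \sum_{m > T} 1/(m \log^{2} m) \lesssim \ell(I)/\log T \lesssim \ell(I)$, and setting $f := 2^{-g}$ (modified on the null set where $g = \infty$ so as to keep $f > 0$) completes the proof. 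The main obstacle is the near-independence estimate $|B_{m} \cap B_{m'}| \lesssim |B_{m}||B_{m'}|$, which genuinely requires the fast growth of $N_{n}$; once this is in place the rest is a routine counting argument.
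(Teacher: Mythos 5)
Your construction is correct and follows essentially the same route as the paper's: in both, $\ln(1/f)$ is built from layers supported on unions of many equally spaced tiny intervals, so that any interval $I$ meets each layer proportionally to its measure ($|I\cap B_m|\lesssim \ell(I)|B_m|$ once the layer's clump count exceeds $1/\ell(I)$), only layers of index beyond roughly $\log_2(1/\ell(I))$ contribute to $\ln^{+}(\ell(I)/f)$, and the layer masses are summable against the amplitude but not against any higher power. The differences are only in parametrization and bookkeeping -- you fix $N_n=2^{n^{2}}$ and the borderline distribution $|B_n|\sim n^{-2}\log^{-2}n$ explicitly and take a supremum of indices, while the paper writes $f=\prod f_n$ with rapidly growing amplitudes $a_n$ and chooses the spacing inductively -- and your verification (including the near-disjointness step giving $|\{g=m\}|\gtrsim |B_m|$, which the paper does not even need) is sound.
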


\begin{rem} Here $\ln^{+}$ is the non-negative logarithm; thus $\ln^{+} x = \ln \max\{1,x\}$. \end{rem}

\begin{proof} The function $f$ will be constructed explicitly as a product
\begin{displaymath} f = \prod_{n = 1}^{\infty} f_{n}. \end{displaymath}
Each factor $f_{n}$ has the form
\begin{displaymath} e^{-a_{n}}1_{E_{n}} + 1_{[0,1] \setminus E_{n}}, \end{displaymath}
where $E_{n} \subset [0,1]$ and $a_{n} \in \N$. Observe that for any $n \in \N$, one has
\begin{align*} \int_{0}^{1} \left(\ln \frac{1}{f(t)} \right)^{1 + 1/n} dt = \int_{0}^{1} \left(\sum_{k = 1}^{\infty} \ln \frac{1}{f_{k}(t)} \right)^{1 + 1/n} dt \geq \int_{E_{n}} a_{n}^{1 + 1/n} dt = |E_{n}| \cdot a_{n}^{1 + 1/n}. \end{align*}
The first requirement is that the sequence $(a_{n})_{n \in \N}$ tends to infinity so rapidly that
\begin{equation}\label{form7} |E_{n}| \cdot a_{n}^{1 + 1/n} \geq n. \end{equation}
This gives \eqref{noLp}, since for any $p > 1$ one has $\|\ln^{+} (1/f)\|_{L^{p}} \geq \|\ln^{+} (1/f)\|_{L^{1 + 1/n}}$ for large enough $n \in \N$.  On the other hand, taking $I = [0,1]$ shows that one should at least have
\begin{equation}\label{L1full} 1 \gtrsim \int_{0}^{1} \ln \frac{1}{f(t)} \, dt = \sum_{n = 1}^{\infty} \int_{0}^{1} \ln \frac{1}{f_{n}(t)} \, dt = \sum_{n = 1}^{\infty} \left( |E_{n}| \cdot a_{n} \right).\end{equation}
We can ensure this by requiring that, say,
\begin{displaymath} |E_{n}| \cdot a_{n} = 2^{-n}. \end{displaymath}
This condition and \eqref{form7} hold simultaneously, if one requires that $a_{n} \to \infty$ rapidly enough, and $|E_{n}| = 2^{-n}/a_{n}$.

The numbers $a_{n}$ have now been chosen, and their definition will no longer be tampered with. In order to verify \eqref{L1} for an arbitrary interval $I \subset [0,1]$, one only chooses the sets $E_{n}$ in an appropriate fashion. The construction is initialised by requiring that $E_{n}$ is a single interval of length $|E_{n}|$; during the process, each $E_{n}$ will be modified a finite number of times. If $I \subset [0,1]$ and $\ell(I) > e^{-a_{1}}$, one simply observes that $\ell(I) \gtrsim 1$, and \eqref{L1} follows from \eqref{L1full}. Next, suppose that the sets $E_{n}$ have already been chosen so that \eqref{L1} holds for all intervals $I \subset [0,1]$ of length $\ell(I) > \exp(-\sum_{n = 1}^{N} a_{n})$ for some $N \geq 1$. To proceed with the induction, one needs to consider the situation 
\begin{equation}\label{form8} \exp\Big(-\sum_{n = 1}^{N + 1} a_{n} \Big) < \ell(I) \leq \exp\Big(-\sum_{n = 1}^{N} a_{n}\Big). \end{equation}
The quantity on the right hand side being a lower bound for the product of the $N$ first functions $f_{n}$, one finds that 
\begin{displaymath} \prod_{n = 1}^{N} f_{n}(t) \geq \ell(I), \qquad x \in [0,1]. \end{displaymath}
With this in mind, and using $\max\{1,ab\} \leq b$ for $a \leq 1 \leq b$, one has
\begin{displaymath} \int_{I} \ln^{+} \left( \frac{\ell(I)}{f(t)} \right) \, dt \leq \sum_{n = N + 1}^{\infty} \int_{E_{n} \cap I} a_{n} \, dt. \end{displaymath}
To get the correct upper bound for this quantity, one only needs to choose the sets $E_{n}$, $n \geq N + 1$, so that the inequality
\begin{equation}\label{form9} |E_{n} \cap I| \leq 2 \cdot |E_{n}| \cdot \ell(I) \end{equation}
holds for all intervals $I \subset [0,1]$ satisfying the left hand side inequality of \eqref{form8}. A natural choice is to let $E_{n}$ be the union of $k = k_{N}$ roughly $1/k$-spaced subintervals of $[0,1]$, of length $|E_{n}|/k$. Then any interval $I \subset [0,1]$ intersects no more than $\ell(I)k + C$ of these subintervals, $C \geq 1$ being an absolute constant, so that
\begin{displaymath} |E_{n} \cap I| \leq (\ell(I)k + C) \cdot \frac{|E_{n}|}{k} = |E_{n}| \cdot \ell(I) + \frac{C|E_{n}|}{k}. \end{displaymath}
It remains to choose $k = k_{N}$ so large that $C/k \leq \exp(-\sum_{n = 1}^{N + 1} a_{n}) \leq \ell(I)$. 

Now \eqref{form9} yields
\begin{displaymath} \int_{I} \ln^{+} \left(\frac{\ell(I)}{f(t)} \right) \, dt \lesssim \ell(I) \cdot \sum_{n = N}^{\infty} \left( |E_{n}| \cdot a_{n} \right) \leq \ell(I), \end{displaymath}
and the proof is complete. \end{proof} 

With the lemma in use, it is easy to define a vertical square function operator $V$, which is bounded on $L^{2}(\R,dx)$, but such that $V(1_{[0,1]}) \notin L^{p}(\R,dx)$ for any $p > 2$. The kernel $s_{t}(x,y)$ is defined as follows. For $x \notin [0,1]$, set $s_{t}(x,\cdot) \equiv 0$ for all $t > 0$. In order to define $s_{t}(x,y)$ for $x \in [0,1]$, let $f$ be the function from the previous lemma, and set
\begin{displaymath} s_{t}(x,y) = \begin{cases} \varphi_{t}(x - y), & \text{if } f(x) \leq t \leq 1,\\ 0, & \text{otherwise.} \end{cases} \end{displaymath}
Here $\varphi$ is a smooth non-negative function with $1_{[-1,1]} \leq \varphi \leq 1_{[-2,2]}$, and $\varphi_{t}(z) = t^{-1}\varphi(z/t)$. Then, for $p > 2$,
\begin{align*} \|V(1_{[0,1]})\|_{L^{p}}^{p} & = \int_{0}^{1} \left(\int_{0}^{\infty} |\theta_{t}1_{[0,1]}(x)|^{2} \, \frac{dt}{t} \right)^{p/2} \, dx\\
& \geq \int_{0}^{1} \left( \int_{f(x)}^{1} \frac{dt}{t} \right)^{p/2} \, dx = \int_{0}^{1} \left( \ln \frac{1}{f(x)} \right)^{p/2} \, dx = \infty. \end{align*}
To prove that $V$ is bounded on $L^{2}$, it suffices (see for example \cite[Theorem 1.1]{MM}) to prove that 
\begin{displaymath} C_{I} := \int_{I} \int_{0}^{\ell(I)} |\theta_{t}1(x)|^{2} \, \frac{dt}{t} \, dx \lesssim \ell(I) \end{displaymath}
for all intervals $I \subset \R$. This is immediate from \eqref{L1}, the first condition on $f$. Since $|\theta_{t}1(x)| = 0$ for $x \notin [0,1]$ and $\theta_{t} \equiv 0$ for $t > 1$, one may assume that $I \subset [0,1]$. Then
\begin{displaymath} C_{I} \lesssim \int_{I} \int_{f(x)}^{\ell(I)} \frac{dt}{t} \, dx = \int_{I} \ln^{+} \left(\frac{\ell(I)}{f(x)} \right) \, dx \lesssim \ell(I), \end{displaymath}
as required.

\section{$S$ can be bounded on $L^2(\mu)$ even if $S_{\alpha}$, $\alpha > 1$, and $V$ are not}
In this section, we prove Theorem \ref{noTricks}. In other words, we construct a Borel probability measure $\mu$ and a square function operator $S$ on $\R$ with the following properties.
\begin{itemize}
\item[(i)] The measure $\mu$ and the kernel of the operator $S$ satisfy the assumptions \eqref{powerBound}, \eqref{eq:size} and \eqref{eq:hol} for some $0 < m < 1$. 
\item[(ii)] The operator $S$ is bounded on $L^{2}(\mu)$, but $S_{\alpha}(1) \notin L^{2}(\mu)$ for any $\alpha > 1$, where
\begin{displaymath} S_{\alpha}f(x) = \Big(\iint_{\Gamma_{\alpha}(x)} |\theta_{t}f(y)|^{2} \, \frac{d\mu(y)dt}{t^{m + 1}}\Big)^{1/2} \end{displaymath}
is the square function associated with the cones 
\begin{displaymath} \Gamma_{\alpha}(x) = \{(y,t) \in \R \times \R_{+} : |y - x| <  \alpha t\}. \end{displaymath}
\end{itemize}

The first concern is constructing the measure $\mu$. Fix $m \in (0,1/2)$. The construction of $\mu$ will proceed iteratively inside the interval $I_{0} = [0,1]$: given a generation $(n - 1)$ interval $I$, one chooses four disjoint compact subintervals $I_{1},I_{2},I_{3},I_{4} \subset I$ and describes how the $\mu$-mass of $I$ is divided among the intervals $I_{j}$. 

To get the construction started, define $I_{0}$ to be the only generation $0$ interval, and set $\mu(I_{0}) = 1$. Next suppose that the $\mu$-masses of all generation $(n - 1)$ intervals (denote this collection by $\mathcal{I}_{n - 1}$) have been determined for some $n \geq 1$ so that
\begin{equation}\label{exactMass} \mu(I) = \ell(I)^{m}, \qquad I \in \mathcal{I}_{n - 1}. \end{equation}
Fix $I \in \mathcal{I}_{n - 1}$. The lengths of the intervals $I_{1},I_{2},I_{3},I_{4} \subset I$ are determined by \eqref{exactMass} and the following requirements:
\begin{itemize}
\item[(i)] $\mu(I_{1}) + \mu(I_{2}) + \mu(I_{3}) + \mu(I_{4}) = \mu(I)$;
\item[(ii)] $\ell(I_{1}) = \ell(I_{4}) =: L_{I}$ and $\ell(I_{2}) = \ell(I_{3})$;
\item[(iii)] $\mu(I_{2}) = \mu(I_{3}) = \mu(I)/Cn$ and $\mu(I_{1}) = \mu(I_{4})$;
\item[(iv)] $\mu(I_{j}) = \ell(I_{j})^{m}$ for $1 \leq j \leq 4$.
\end{itemize}
In (iii), $C \geq 4$ is an absolute constant to be fixed in the course of the argument. These requirements -- as well as the placement of the intervals -- are depicted in Figure \ref{fig1}.
\begin{figure}[h!]
\begin{center}
\includegraphics[scale = 0.8]{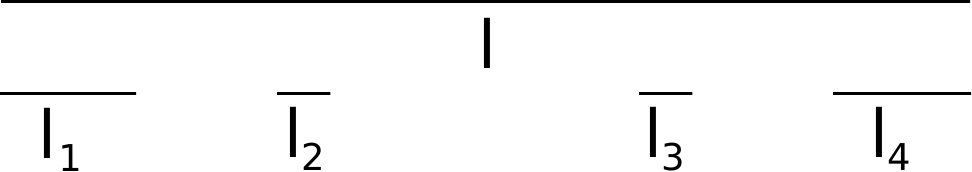}
\caption{The relation between generation $n$ and $(n + 1)$ intervals}\label{fig1}
\end{center}
\end{figure}
The placement of the intervals $I_{j}$ is described formally as follows. The intervals $I_{1}$ and $I_{4}$ have common boundary with $I$, whereas the the intervals $I_{2}$ and $I_{3}$ lie between $I_{1}$ and $I_{4}$. The final requirement is that
\begin{equation}\label{separation} \dist(I_{1},I_{2}) = L_{I} = \dist(I_{3},I_{4}). \end{equation}
The fact that $m < 1/2$ ensures that \eqref{separation} does not contradict \eqref{exactMass} or the conditions (i)-(iv). The definition of the measure $\mu$ is complete, so it is time to prove (i).



\begin{lem} For all intervals $J \subset \R$, one has $\mu(J) \lesssim \ell(J)^{m}$. \end{lem}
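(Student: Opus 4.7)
The plan is to reduce the estimate to the natural scales appearing in the Cantor construction and then do a direct geometric case analysis. First, since $\mu$ is supported in $[0,1]$, I may replace $J$ by $J \cap [0,1]$ without changing $\mu(J)$ or increasing $\ell(J)$, and assume $J \subset [0,1]$ and $\mu(J) > 0$. Starting from $I^{(0)} := [0,1]$, I iteratively set $I^{(k+1)}$ to be the (unique, if it exists) generation-$(k+1)$ subinterval of $I^{(k)}$ containing $J$. Because $\ell(I^{(k)}) \to 0$ and $J$ is not a point, this descent terminates at some finite $n$, giving $J \subset I := I^{(n)} \in \mathcal{I}_n$ such that $J$ is not contained in any of the four generation-$(n+1)$ subintervals $I_1, I_2, I_3, I_4$ of $I$.

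The next step is to extract the key geometric parameters. Combining requirements (iii)-(iv) with $\mu(I) = \ell(I)^m$, I find $L_I = \ell(I)\bigl[(1 - 2/(Cn))/2\bigr]^{1/m}$ and $\ell(I_2) = \ell(I_3) = \ell(I)[1/(Cn)]^{1/m}$. The hypothesis $m < 1/2$ forces $4 \cdot (1/2)^{1/m} < 1$ -- this is precisely the inequality needed for the construction to be consistent at all -- so for $C$ large enough I obtain $L_I \leq \ell(I)/4$, $\ell(I_2) \ll L_I$, and the crucial estimate $\dist(I_2, I_3) = \ell(I) - 4L_I - 2\ell(I_2) \gtrsim \ell(I)$ with an absolute constant. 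On the mass side, $\mu(I_1) = \mu(I_4) = L_I^m \approx \mu(I)/2$, while $\mu(I_2) = \mu(I_3) = \mu(I)/(Cn) \lesssim L_I^m$ once $C$ is large.

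I then distinguish three cases by how $J$ meets the $I_j$. (a) If $J$ is disjoint from $\bigcup_j I_j$, then $J \cap \supp(\mu) = \emptyset$ and $\mu(J) = 0$. (b) If $J$ meets at least two of the $I_j$, let $j_{\min} < j_{\max}$ be the extremal indices; connectedness of $J$ gives $\ell(J) \geq \dist(I_{j_{\min}}, I_{j_{\max}})$. When $\{j_{\min}, j_{\max}\}$ straddles the middle gap (one index in $\{1,2\}$, the other in $\{3,4\}$), this distance is $\gtrsim \ell(I)$, and $\mu(J) \leq \mu(I) = \ell(I)^m \lesssim \ell(J)^m$. In the two remaining one-sided subcases $(1,2)$ and $(3,4)$, I have $\ell(J) \geq L_I$ together with $\mu(J) \leq \mu(I_1) + \mu(I_2) \lesssim L_I^m$, which again gives $\mu(J) \lesssim \ell(J)^m$.

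The only remaining case is (c), when $J$ meets exactly one subinterval $I_j$. Here $\mu(J) = \mu(J \cap I_j)$ and $\ell(J \cap I_j) \leq \ell(J)$, so it suffices to bound the shorter interval $J \cap I_j \subset I_j \in \mathcal{I}_{n+1}$. My plan is to iterate the descent: each repetition of case (c) strictly increases the stopping generation, and the process must terminate in case (a) or (b) -- otherwise the intervals $J \cap I_j \cap \cdots$ would nest down to a single point $\{x\}$ while retaining $\mu$-mass equal to $\mu(J)$, contradicting the fact that $\mu$ is non-atomic (since $\mu(\{x\}) \leq \mu(I^{(k)}) = \ell(I^{(k)})^m \to 0$). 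This termination argument is the only real subtlety; everything else is direct arithmetic with the explicit parameters of the construction.
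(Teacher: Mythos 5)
Your proof is correct and follows essentially the same route as the paper's: descend to the first generation where $J$ is no longer captured by a single child interval, use the fact that consecutive children of $I$ are separated by gaps comparable to $\ell(I)$ (the paper's \eqref{separation2}, which you re-derive from (i)--(iv) and \eqref{separation}), and conclude $\mu(J)\le\mu(I)=\ell(I)^m\lesssim\ell(J)^m$. The only differences are presentational -- you make explicit the truncation $J\mapsto J\cap I_j$ and the termination of the iteration via non-atomicity, steps the paper leaves implicit (and note the harmless indexing slip: for $I\in\mathcal{I}_n$ the middle children carry mass $\mu(I)/(C(n+1))$, not $\mu(I)/(Cn)$).
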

\begin{proof} Let $J \subset \R$ be an arbitrary interval. One may assume that $J \subset I_{0}$, since $\mu(J) = \mu(J \cap I_{0})$. The crucial observation is this: for any interval $I \in \mathcal{I}_{n}$, $n \in \N$,
\begin{equation}\label{separation2} \min\{\dist(I_{i},I_{j}) : 1 \leq i < j \leq 4\} \sim \ell(I). \end{equation}
This follows from \eqref{separation} and by taking $C \geq 4$ large enough in (iii). Thus, if $J$ intersects two of the intervals in $\mathcal{I}_{1}$, then $\ell(J) \gtrsim 1$, and the proof is complete. If not, then one may assume that $J \subset I_{j}$ for some $I_{j} \in \mathcal{I}_{1}$. Next, apply the same argument inside $I_{j}$: if $J$ intersects two of the second generation subintervals of $I_{j}$, one has
\begin{displaymath} \mu(J) \leq \mu(I_{j}) = \ell(I_{j})^{m} \lesssim \ell(J)^{m} \end{displaymath}
by \eqref{separation2}. Otherwise $\mu(I_{j} \cap J)$ is, once again, contained in a single second generation subinterval of $I_{j}$, and one may iterate the argument. Eventually, assuming that $\ell(J) > 0$, one encounters a situation where $J$, for the first time, intersects two distinct generation $(n + 1)$ subintervals inside a certain interval in $\mathcal{I}_{n}$. Then the reasoning above shows that $\mu(J) \lesssim \ell(J)^{m}$, and the proof is complete. \end{proof}

It remains to construct the operator $S$. To this end, one needs to fix the values of the kernel $s_{t}$, so let $x,y \in \R$ and $t > 0$ be arbitrary. Find out if there is $n \in \N$ and $I \in \mathcal{I}_{n}$ such that 
\begin{displaymath} x \in I_{2} \cup I_{3} \quad \text{ and } \quad \tfrac{L_{I}}{2} \leq t \leq L_{I}. \end{displaymath}
If this is not the case, set $s_{t}(x,y) = 0$. But if such an interval $I \in \mathcal{I}_{n}$ exists, let 
\begin{displaymath} s_{t}(x,y) = \frac{\varphi_{I}}{\ell(I)^{m}}, \end{displaymath} 
where $\varphi_{I}$ is a smooth bump function adapted to the interval $I$, with $0 \leq \varphi_{I} \leq 1_{I}$ and $\int \varphi \, d\mu \sim \mu(I) = \ell(I)^{m}$ (for example, one can take $\varphi_{I}(x) = \varphi([x - c_{I}]/\ell(I))$, where $c_{I}$ is the midpoint of $I$ and $\varphi$ is a non-negative smooth function with $\operatorname{spt} \varphi = [-1/2,1/2]$). Recalling that $t \sim L_{I} \sim \ell(I)$ for $L_{I}/2 \leq t \leq L_{I}$, it is easy to check that the size and smoothness conditions \eqref{eq:size} and \eqref{eq:hol} are satisfied.

Without further ado, we can now check that the square function operator associated with $\mu$ and the kernel $s_{t}$ is bounded on $L^{2}(\mu)$. Fix $f \in L^{2}(\mu)$ and write
\begin{align*} \|Sf\|_{L^{2}(\mu)}^{2} & = \iint_{\R \times \R_{+}} |\theta_{t}f(y)|^{2} \frac{\mu(B(y,t))}{t^{m}} \, d\mu(y) \, \frac{dt}{t}\\
& \leq \sum_{n = 0}^{\infty} \sum_{I \in \mathcal{I}_{n}} \int_{L_{I}/2}^{L_{I}} \int_{I_{2} \cup I_{3}} \left( \frac{1}{\ell(I)^{m}} \int_{I} |f(z)| \, d\mu(z) \right)^{2} \frac{\mu(B(y,t))}{t^{m}} \, d\mu(y) \, \frac{dt}{t}. \end{align*}
To estimate the latter expression further, we note that for $I \in \mathcal{I}_{n}$, $y \in I_{2} \cup I_{3}$ and $L_{I}/2 \leq t \leq L_{I}$, the separation condition \eqref{separation} implies that
\begin{displaymath} \mu(B(y,t)) = \mu(B(y,t) \cap [I_{2} \cup I_{3}]) \leq \mu(I_{2}) + \mu(I_{3}) = \frac{2\mu(I)}{C(n + 1)} \leq \frac{t^{m}}{n + 1}, \end{displaymath}
where the final estimate holds for $C \geq 4$ large enough. Combining this bound (applied twice) with the Cauchy-Schwarz inequality and the disjointness of the intervals in $\mathcal{I}_{n}$, we obtain
\begin{align*} \|Sf\|_{L^{2}(\mu)}^{2} & \leq \sum_{n = 0}^{\infty} \frac{1}{n + 1} \sum_{I \in \mathcal{I}_{n}} \left(\frac{1}{\ell(I)^{m}} \int_{I} |f(z)| \, d\mu(z) \right)^{2} \mu(I_{2} \cup I_{3})\\
& \leq \sum_{n = 0}^{\infty} \frac{1}{(n + 1)^{2}} \sum_{I \in \mathcal{I}_{n}} \frac{\mu(I)^{2}}{\ell(I)^{2m}} \int_{I} |f(z)|^{2} \, d\mu(z)\\
& = \sum_{n = 0}^{\infty} \frac{1}{(n + 1)^{2}} \|f\|_{L^{2}(\mu)}^{2} \sim \|f\|_{L^{2}(\mu)}^{2}. \end{align*}

All that remains is to verify that $S_{\alpha}(1) \notin L^{2}(\mu)$ for $\alpha > 1$, where $S_{\alpha}$ is the square function associated with the measure $\mu$, the kernel $s_{t}$ and the cones $\Gamma_{\alpha}$. First, recalling that $\int \varphi_{I} \, d\mu \sim \ell(I)^{m}$ for $I \in \mathcal{I}_{n}$, one has
\begin{equation}\label{S1} \|S_{\alpha}(1)\|_{L^{2}(\mu)}^{2} \sim \sum_{n = 0}^{\infty} \sum_{I \in \mathcal{I}_{n}} \int_{L_{I}/2}^{L_{I}} \int_{I_{2} \cup I_{3}} \frac{\mu(B(y,\alpha t))}{t^{m}} \, d\mu(y) \, \frac{dt}{t}. \end{equation}
So, one needs to study $\mu(B(y,\alpha t))$ for $y \in I_{2}$, say. Recall, once more, the separation condition \eqref{separation}. For $t = L_{I}$, the position of $I_{2}$ relative to $I_{1}$ was chosen precisely so that if $y_{l}$ is the left endpoint of $I_{2}$, then $\mu(B(y_{l},t))$ contains the right endpoint $y_{r}$ of $I_{1}$. Consequently, there exist constants $\tau_{\alpha} < 1$ and $c_{\alpha} > 0$ depending only on $\alpha > 1$ with the following property: if $\tau_{\alpha}L_{I} \leq t \leq L_{I}$, then
\begin{equation}\label{form4} B(y,\alpha t) \cap I_{1} \supset [y_{r} - c_{\alpha}L_{I}, y_{r}] \end{equation} 
for all $y \in [y_{l}, y_{l} + c_{\alpha}L_{I}]$. Since $\ell(I_{2}) < c_{\alpha}L_{I}$ for $n \geq n_{\alpha}$ (where $n$ is the generation of $I$), we actually see that, for such $n$, \eqref{form4} holds for all $y \in I_{2}$. The final observation is that
\begin{equation}\label{form5} \mu([y_{r} - c_{\alpha}L_{I}, y_{r}]) \gtrsim_{\alpha} \mu(I_{1}) = L_{I}^{m}, \end{equation}
which follows easily from the construction of $\mu$. Namely, the intersection $[y_{r} - c_{\alpha}L_{I},y_{r}] \cap I_{1}$ has to contain an interval of the form $I_{144\cdots4}$, where the number of iterations is bounded from above by a constant depending only on $c_{\alpha}$. Since each iteration decreases the $\mu$-measure of the interval by no more than a factor of $1/4$, we obtain \eqref{form5}.

Plugging the lower bound into \eqref{S1} yields
\begin{align}\label{form6} \|S_{\alpha}(1)\|_{L^{2}(\mu)}^{2} & \gtrsim_{\alpha} \sum_{n = n_{\alpha}}^{\infty} \sum_{I \in \mathcal{I}_{n}} \int_{\tau_{\alpha}L_{I}}^{L_{I}} \int_{I_{2}} \frac{L_{I}^{m}}{t^{m}} \, d\mu(y) \, \frac{dt}{t}\\
& \sim_{\alpha} \sum_{n = n_{\alpha}}^{\infty} \frac{1}{C(n + 1)} \sum_{I \in \mathcal{I}_{n}} \mu(I) = \sum_{n = n_{\alpha}}^{\infty} \frac{1}{C(n + 1)} = \infty. \notag \end{align}
This finishes the proof of Theorem \ref{noTricks}. To obtain Corollary \ref{noTricksCor}, observe that for any $\alpha > 1$ one has
\begin{align*} \infty = \|S_{\alpha}(1)\|_{L^{2}(\mu)}^{2} & = \iint_{\R^{2}_{+}} |\theta_{t}1(y)|^{2} \frac{\mu(B(y,\alpha t))}{t^{m}} \, d\mu(y) \, \frac{dt}{t}\\
& \lesssim_{\alpha} \iint_{\R^{2}_{+}} |\theta_{t}1(y)|^{2} \, d\mu(y) \, \frac{dt}{t} = \|V(1)\|_{L^{2}(\mu)}^{2}. \end{align*}
Thus $V(1) \notin L^{2}(\mu)$.


\section{Sharp weighted inequalities for square functions with doubling measures}
In this section we will prove Theorem \ref{weighted}. We assume that $\mu$ is doubling, has polynomial growth \eqref{powerBound} and that $S$ is of weak type $(1,1)$. Under these assumptions we show
the sharp weighted bounds for $S$. That is, we fix $p \in (1,\infty)$ and show that
$$ \|Sf\|_{L^p(w)} \lesssim_{n,p} [w]_{A_p}^{\max{(\frac{1}{2},\frac{1}{p-1}})} \|f\|_{L^p(w)},$$
where $$[w]_{A_p}=\sup_Q \left(\fint_Q w \,d\mu \right) \left(\fint_Q w^{-\frac{1}{p-1}} \,d\mu \right)^{p-1}.$$

First, a few definitions. Denote the average of a function $f$ over a cube or ball $Q$ by
$$\fint_Q f = \frac{1}{\mu(Q)} \int_Q f d\mu.$$
The non-increasing rearrangement of a $\mu-$measurable function $f$ on $\R^n$ is defined by
$$ f^*(t)= \inf \{\alpha>0: \mu({x\in \R^n : |f(x)|<\alpha}) <t \}, \qquad 0<t<\infty.$$
If $Q$ is a cube in $\R^n$, the local mean oscillation of $f$ on $Q$ is defined by
$$\osc{\lambda}(f;Q)= \inf_{c \in \R} \left((f-c)1_Q\right)^*\left(\lambda \mu(Q) \right).$$

Let us notice that obviously the exponent $\alpha>0$ in the pointwise and H\"{o}lder bounds for $s_t$ can be always considered to be strictly smaller than $1$. The following lemma is the key to the proof.
\begin{lem}
For any $Q\subset \R^n$,
\begin{equation}\label{weightedmainlemma} 
\osc{\lambda}\left( (Sf)^2 ; Q \right) \lesssim_{n,\lambda} \sum_{k\geq 0} 2^{-k \epsilon} \left( \fint_{2^k Q} |f| d\mu \right)^2,
\end{equation}
for some $\eps>0$.
\end{lem}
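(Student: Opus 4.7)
My plan is to adapt the Lerner-type local mean oscillation machinery to the doubling setting. Fix $Q$ with side-length $\ell := \ell(Q)$ and centre $x_{Q}$, and split $f = f_{\mathrm{loc}} + f_{\mathrm{far}}$ with $f_{\mathrm{loc}} := f \cdot 1_{2Q}$. From the pointwise bound $(Sf)^{2} \leq 2(Sf_{\mathrm{loc}})^{2} + 2(Sf_{\mathrm{far}})^{2}$ and the sub-additivity of $\osc{\lambda}(\,\cdot\,;Q)$, it suffices to bound the two pieces separately. For the local piece I would combine the elementary inequality $\osc{\lambda}(g;Q) \leq (g\, 1_{Q})^{*}(\lambda\mu(Q))$ with the assumed weak $(1,1)$ bound for $S$ to get
\[
\osc{\lambda}\bigl(2(Sf_{\mathrm{loc}})^{2};Q\bigr) \lesssim_{\lambda} \left(\frac{\|f_{\mathrm{loc}}\|_{L^{1}(\mu)}}{\mu(Q)}\right)^{2},
\]
which the doubling of $\mu$ absorbs into the $k=0,1$ terms of the series in \eqref{weightedmainlemma}.

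The bulk of the work is the far piece, for which I would aim at the pointwise oscillation estimate
\[
|(Sf_{\mathrm{far}})^{2}(x) - (Sf_{\mathrm{far}})^{2}(x_{Q})| \lesssim \sum_{k \geq 1} 2^{-k\epsilon}\left(\fint_{2^{k}Q}|f|\,d\mu\right)^{2}, \qquad x \in Q;
\]
taking the competing constant $c := 2(Sf_{\mathrm{far}})^{2}(x_{Q})$ in the definition of the local mean oscillation then settles this piece. I would express the left-hand side as $\iint_{\Gamma(x)\triangle\Gamma(x_{Q})}|\theta_{t}f_{\mathrm{far}}(y)|^{2}\,d\mu(y)\,dt/t^{m+1}$ and split the $t$-integration at the threshold $t = \ell$.

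In the low regime $t \leq \ell$ the symmetric difference lies inside $B(x_{Q},2t)$, which has $\mu$-mass $\lesssim t^{m}$ by \eqref{powerBound}. Decomposing $\supp f_{\mathrm{far}} \subset \bigcup_{k\geq 1}(2^{k+1}Q\setminus 2^{k}Q)$ and invoking the size estimate \eqref{eq:size} together with $\mu(2^{k+1}Q) \lesssim (2^{k}\ell)^{m}$ gives
\[
|\theta_{t}f_{\mathrm{far}}(y)| \lesssim \sum_{k\geq 1}\left(\frac{t}{2^{k}\ell}\right)^{\alpha}\fint_{2^{k+1}Q}|f|\,d\mu.
\]
Squaring via the Cauchy--Schwarz trick $(\sum a_{k})^{2} \lesssim_{\delta}\sum 2^{k\delta}a_{k}^{2}$ with $0 < \delta < 2\alpha$, and integrating $t^{2\alpha-1}\,dt$ over $(0,\ell]$, produces the tail $\sum_{k\geq 1}2^{-k(2\alpha-\delta)}(\fint_{2^{k}Q}|f|)^{2}$, which is of the required form.

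The principal obstacle lies in the high regime $t > \ell$: the symmetric difference now reduces to a thin spherical sliver of width $\sim\ell$ around $\partial B(x_{Q},t)$, whose $\mu$-mass is only controlled by $t^{m}$ through polynomial growth alone, which is insufficient after integration in $t$. This is precisely the place where the doubling of $\mu$ is indispensable, through the reverse-doubling inequality
\[
\mu\bigl(B(x_{Q},t+\ell)\setminus B(x_{Q},t-\ell)\bigr) \lesssim (\ell/t)\,t^{m}, \qquad t \geq 2\ell,
\]
which supplies the missing decay factor $\ell/t$. Combined with the hybrid size bound $|\theta_{t}f_{\mathrm{far}}(y)| \lesssim \sum_{k\geq 1}\min\{(2^{k}\ell/t)^{m},(t/(2^{k}\ell))^{\alpha}\}\fint_{2^{k+1}Q}|f|\,d\mu$, obtained by using the size estimate in two complementary regimes ($|s_{t}(y,z)|\lesssim t^{-m}$ when $2^{k}\ell\leq t$ versus $|s_{t}(y,z)|\lesssim t^{\alpha}/(2^{k}\ell)^{m+\alpha}$ when $2^{k}\ell>t$), a further Cauchy--Schwarz step and an explicit $t$-integration show that the $k$-th shell contributes $2^{-k\eta}(\fint_{2^{k}Q}|f|)^{2}$ for some $\eta > 0$, yielding the desired geometric series and closing \eqref{weightedmainlemma}.
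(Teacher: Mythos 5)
There is a genuine structural gap at the very first step, the reduction ``$(Sf)^2 \le 2(Sf_{\mathrm{loc}})^2 + 2(Sf_{\mathrm{far}})^2$ plus sub-additivity of $\osc{\lambda}(\cdot\,;Q)$, so it suffices to treat the two pieces separately.'' The local mean oscillation is an infimum over constants of a rearrangement of $|g-c|$: it is sub-additive over an \emph{exact} decomposition $g=g_1+g_2$ (at the cost of shrinking $\lambda$), but it is not monotone under pointwise domination, so a one-sided inequality between $(Sf)^2$ and the sum of the two squares gives you nothing. Concretely, with your choice $c=2(Sf_{\mathrm{far}})^2(x_Q)$ you must also control $c-(Sf)^2(x)$ on most of $Q$, and sublinearity only yields $(Sf)^2(x)\ge \tfrac12 (Sf_{\mathrm{far}})^2(x)-(Sf_{\mathrm{loc}})^2(x)$, which leaves an uncancelled error of size comparable to $(Sf_{\mathrm{far}})^2(x_Q)$ itself; this is a global quantity, of the order $\sum_k \bigl(\fint_{2^kQ}|f|\,d\mu\bigr)^2$ \emph{without} any factor $2^{-k\eps}$, and it is not bounded by the right-hand side of \eqref{weightedmainlemma} (take $f$ concentrated on a far dyadic annulus $2^{j+1}Q\setminus 2^jQ$ with $j$ large). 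Expanding the square exactly does not help either: the cross term $2\Re\iint \theta_t f_{\mathrm{loc}}\,\overline{\theta_t f_{\mathrm{far}}}$ is only dominated by $2\,Sf_{\mathrm{loc}}(x)\,Sf_{\mathrm{far}}(x)$, which again contains the uncontrolled factor $Sf_{\mathrm{far}}$, and your proposal never addresses it. This is precisely why the paper decomposes the \emph{domain of integration in $t$} rather than the function $f$: it writes $(Sf)^2(x)$ as a piece with $t\lesssim \ell(Q)$ plus pieces $I_kf(x)$ with $t\sim 2^k\ell(Q)$, takes the constant $c=\sum_k I_kf(c_Q)$, handles the (nonnegative) near piece via the weak $(1,1)$ bound after splitting $f$ there, and differences each far piece directly in $L^\infty(Q)$; since the square of the square function is additive over a partition of $\R^{n+1}_+$, no cross terms ever appear.

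A second, more local problem is your sliver estimate in the regime $t>\ell$. The bound $\mu\bigl(B(x_Q,t+\ell)\setminus B(x_Q,t-\ell)\bigr)\lesssim (\ell/t)\,t^m$ is not what ``reverse doubling'' means, and it is not a consequence of doubling with exponent $1$: what doubling gives (and only through a nontrivial result, Buckley's annular decay theorem for doubling measures on length spaces such as $\R^n$) is $\mu\bigl(B(x,t)\setminus B(x,(1-\eps)t)\bigr)\lesssim \eps^{\delta}\mu(B(x,t))$ for \emph{some} $\delta=\delta(C_\mu,n)\in(0,1]$. That weaker form would in fact suffice for your high-regime computation, so this part is repairable, but as written the claim is unjustified and asserted in a strength you cannot extract from doubling alone. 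Note that the paper's argument is designed to avoid this input altogether: in the regime $t\sim 2^k\ell(Q)$ it works with a Lipschitz aperture cutoff $\phi(|x-y|/t)$ and pays only $\|\phi\|_{\mathrm{Lip}}\,|x-c_Q|/t\lesssim 2^{-k}$ for the difference, so no estimate on the $\mu$-measure of thin annuli is ever needed. Even with the annular-decay fix, however, the proof does not close because of the reduction gap described above.
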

\begin{proof}
Let us fix a cube $Q \subset \R^n$ of side-length $\ell(Q)$ and a point $x \in Q$. Using the fact that $t^{-m} \lesssim {\mu(B(y,t))}^{-1}$, we see that
\begin{multline*}
(Sf)^2(x) \lesssim \int_0^{2 \ell(Q)}\!\!\! \int_{B(x,t)} |\theta_tf(y)|^2 \frac{d\mu(y)dt}{t^{m+1}}+ \sum_{k\geq 2} \int_{2^k \ell(Q)}^{2^{k+1} \ell(Q)} \!\!\! \int_{B(x,t)} |\theta_tf(y)|^2 \frac{d\mu(y)dt}{\mu(B(y,t))t}\\=:I_1f(x) + \sum_{k\geq 2} I_kf(x).
\end{multline*}
Therefore, it is enough to bound the right hand side of the inequality
\begin{equation}\label{oscillationbound}\osc{\lambda} ((Sf)^2;Q) \leq I_1^*f(x) + \sum_{k\geq 2} \|I_kf(x) - I_kf(c_Q)\|_{L^\infty(Q)},\end{equation}
where $c_Q$ is the center of $Q$. If $R$ is any cube in $\R^n$ and $(y,t) \in R \times (0,\ell(R))$ then by \eqref{eq:size} and $|y-z| \sim |z-c_R| \sim 2^j \ell(R)$
\begin{equation*}
|\theta_t (f 1_{2^{j+1}R\setminus 2^j R})(y)| \lesssim t^\alpha \int_{2^{j+1}R\setminus 2^j R} \frac{|f(z)|d\mu(z)}{|z-c_R|^\alpha \mu\left(B\left(c_R, 2^j \ell(R)\right)\right)}\end{equation*}
\begin{equation}
\leq \left(\frac{t}{2^j \ell(R)}\right)^\alpha \frac{1}{\mu\left(B\left(c_R, 2^j \ell(R)\right)\right)}\int_{2^{j+1}R\setminus 2^j R}|f(z)|d\mu(z)=:A(j,R),
\label{weightedannular}\end{equation}
for $j \geq 2$. Moreover, if $(y,t) \in R \times (\ell(R)/2,\ell(R))$ by \eqref{eq:size} and the doubling property of $\mu$ we have
\begin{equation}\label{weightedlocal}
|\theta_t (f 1_{4R})(x)|\lesssim \int_{4R} \frac{|f(z)|d\mu(z)}{\mu\left(B\left(y,t\right)\right)} \lesssim  \frac{1}{\mu\left(B\left(c_R, 4 \ell(R)\right)\right)}\int_{4R} |f(z)|d\mu(z)=:A(1,R).
\end{equation}
We introduce now a Lipschitz  cut-off function $\phi$ such that $1_{B(0,1)} \leq \phi \leq 1_{B(0,2)}$. Therefore, by $|x-y| \sim |y-c_Q| < t \sim 2^k \ell(Q)$, which also entails that $y \in 2^{k+2} Q$, we have
\begin{multline*}
|I_k f(x) - I_k f(c_Q)| \leq\\ 
\int_{2^k \ell(Q)}^{2^{k+1}\ell(Q)} \int_{2^{k+2}Q} | \phi \left(|x-y|/t \right) - \phi \left(|c_Q-y|/t\right) | |\theta_t f(y)|^2 \frac{d\mu(y)dt}{\mu(B(y,t))t}\\
\leq \|\phi\|_{Lip}  \int_{2^k \ell(Q)}^{2^{k+1}\ell(Q)} \int_{2^{k+2}Q}\frac{|x-c_Q|}{t} |\theta_t f(y)|^2 \frac{d\mu(y)dt}{\mu(B(y,t))t}\\
\lesssim 2^{-k}  \int_{2^k \ell(Q)}^{2^{k+1}\ell(Q)}\!\!\!\! \int_{2^{k+2}Q}|\theta_t (f 1_{4 \cdot 2^{k+2}Q})(y)|^2  \frac{d\mu(y)dt}{\mu(B(y,t))t} \\
+ 2^{-k}  \int_{2^k \ell(Q)}^{2^{k+1}\ell(Q)}\!\!\!\! \int_{2^{k+2}Q} \sum_{j\geq 2} |\theta_t (f 1_{2^{j+1} 2^{k+2}Q \setminus 2^j 2^{k+2}Q})|^2  \frac{d\mu(y)dt}{\mu(B(y,t))t},
\end{multline*}
which by \eqref{weightedannular} and \eqref{weightedlocal} for $R=2^{k+2}Q$ and $\mu(B(y,t)) \sim \mu(2^{k+2}Q)$ is bounded by a constant multiple of
\begin{multline*} 2^{-k}  \int_{2^k \ell(Q)}^{2^{k+1}\ell(Q)}\!\!\!\! \int_{2^{k+2}Q}
\left(A(1,2^{k+2}Q)^2 + \sum_{j\geq 2} A(j,2^{k+2}Q)^2\right) \frac{d\mu(y)dt}{\mu(B(y,t))t}\\
\lesssim 2^{-k}  \fint_{2^k \ell(Q)}^{2^{k+1}\ell(Q)}\!\!\!\! \fint_{2^{k+2}Q} \left(  \fint_{2^{k+4} Q} |f(z)| d\mu(z) \right)^2 d\mu(y)dt \\ + 2^{-k}  \fint_{2^k \ell(Q)}^{2^{k+1}\ell(Q)}\!\!\!\! \fint_{2^{k+2}Q} \sum_{j\geq 2}\left( \left(\frac{2^k \ell(Q)}{2^{j+k} \ell(Q)}\right)^\alpha \fint_{2^{j+k+3}Q}|f(z)|d\mu(z)\right)^2 d\mu(y)dt\\
\lesssim 2^{-k} \left( \fint_{2^{k+4} Q} |f(z)| d\mu(z) \right)^2 + 2^{-k} \left( \sum_{j\geq 2}2^{-j\alpha}\fint_{2^{j+k+3}Q}|f(z)|d\mu(z)\right)^2,\end{multline*}
where in the last step we used H\"{o}lder's inequality. Notice that the first term is exactly what we were after while the sum of the second term in $k \geq 2$ after we substitute $i=k+j$ and apply H\"{o}lder's inequality is bounded by
$$\sum_{k \geq 2} 2^{-k(1-\alpha)} \sum_{i\geq k+2} 2^{-i \alpha} \left(\fint_{2^{i+3}Q}|f(z)|d\mu(z)\right)^2 \lesssim  \sum_{i\geq 4}2^{-i \alpha} \left(\fint_{2^{i+3}Q}|f(z)|d\mu(z)\right)^2.$$
It remains to show that $$(I_1(f)1_Q)^*(\lambda \mu(Q)) \lesssim  \sum_{k\geq 0} 2^{-k \epsilon} \left( \fint_{2^k Q} |f| d\mu \right)^2.$$
To do so, we write $f= f 1_{4Q} + (f 1_{\R^n \setminus 4Q})$ and therefore, by the sublinearity of our operator,
$$(I_1(f)1_Q)^*(\lambda \mu(Q)) \lesssim (I_1(f 1_{4Q})1_Q)^*(\lambda \mu(Q))+(I_1(f 1_{\R^n \setminus 4Q})1_Q)^*(\lambda \mu(Q)),$$
which, by the weak-type $(1,1)$ boundedness of $S$ for the first term and Chebyshev's inequality, Fubini, \eqref{weightedannular} and H\"{o}lder's inequality for the second, is $\lesssim$
$$\left( \fint_{4Q}|f|d\mu\right)^2 + \fint_Q \int_0^{2 \ell(Q)} \int_{B(x,t)} |\theta_t (f 1_{\R^n \setminus 4Q})|^2 \frac{d\mu(y)dt}{\mu(B(y,t))t} d\mu(x)$$
$$\lesssim \left( \fint_{4Q}|f|d\mu\right)^2 + \fint_{3Q}  \int_0^{2 \ell(Q)} |\sum_{k \geq 2} \theta_t (f 1_{2^{k+1}Q \setminus 2^{k}Q})|^2 \frac{d\mu(y)dt}{t}$$
$$\lesssim \left( \fint_{4Q}|f|d\mu\right)^2 + \int_0^{2 \ell(Q)} \left(\sum_{k \geq 2} \left(\frac{t}{2^k \ell(Q)}\right)^\alpha \frac{1}{\mu\left(2^{k+1} Q\right)}\int_{2^{k+1}Q}|f(z)|d\mu(z)\right)^2\frac{dt}{t}$$
$$\lesssim \left( \fint_{4Q}|f|d\mu\right)^2 + \sum_{k \geq 2}  2^{-k \alpha} \left( \fint_{2^{k+1} Q} |f| d\mu \right)^2,$$
which concludes the proof of the lemma.
\end{proof}

\subsection*{Proof of the weighted bound}
After the previous lemma, the weighted bound follows by inspection of the proof of Theorem 1.1 of \cite{Le2}.


\begin{thebibliography}{10}

\bibitem{Hy} T. Hyt\"onen, The sharp weighted bound for general Calder\'on-Zygmund operators, Ann. of Math. 175 (3) (2012) 1473--1506.

\bibitem{HM} T. Hyt\"onen, H. Martikainen, On general local $Tb$ theorems, Trans. Amer. Math. Soc. 364 (9) (2012) 4819--4846.


\bibitem{HTV} E. Harboure, J. L. Torrea, B. Viviani, A vector-valued approach to tent spaces, J. Anal. Math. 56 (1991) 125--140.

\bibitem{Le1} A. Lerner, Sharp weighted norm inequalities for Littlewood--Paley operators and singular integrals, Adv. Math. 226 (2011) 3912--3926.

\bibitem{Le2} A. Lerner, On sharp aperture-weighted estimates for square functions, preprint, 	arXiv:1301.1051, 2013.

\bibitem{LV}  M.T. Lacey, A.V. V\"ah\"akangas, On the local $Tb$ theorem: a direct proof under duality assumption, preprint, arXiv:1209.4161, 2012.

\bibitem{LY} H. Lin, D. Yang, An interpolation theorem for sublinear operators on non-homogeneous metric measure spaces, Banach J. Math. Anal. 6 (2) (2012) 168--179.

\bibitem{MM} H. Martikainen, M. Mourgoglou, Square functions with general measures, Proc. Amer. Math. Soc. (2013), in press, arXiv:1212.3684, 2012.

\bibitem{NTVa}
F. Nazarov, S. Treil, A. Volberg, Accretive system {Tb}-theorems on nonhomogeneous spaces, Duke Math. J. 113 (2) (2002) 259--312.


\bibitem{To2} X. Tolsa, A proof of the weak $(1,1)$ inequality for singular integrals with non doubling measures based on a Calder\'on--Zygmund decomposition, Publ. Mat. 45 (2001) 163--174.

\bibitem{Wi} J.M. Wilson, The intrinsic square function, Rev. Mat. Iberoamericana 23 (2007) 771--791

\bibitem{XZ} Q. Xue and J. Zhang, Endpoint Estimates for a Class of Littlewood-Paley Operators with Nondoubling Measures, J. Inequal. App. 2009


\end{thebibliography}
\end{document}